%%%%%%%%%%%%%%%%%%%%%%%%%%%%%%%%
% LaTeX file
%%%%%%%%%%%%%%%%%%%%%%%%%%%%%%%%

\documentclass[11pt, draft]{amsart}
\usepackage{amssymb, amstext, amscd, amsmath, amssymb}
\usepackage{mathtools, xypic, paralist, color, dsfont, rotating}
\usepackage{verbatim}
\usepackage{enumerate}
\usepackage{float}

% change font style
%\usepackage{palatino}
%\usepackage{times}
%\usepackage{tgbonum}
%\usepackage{charter}
%\usepackage{times}

% check if all items are cited
%\usepackage{refcheck}
%%%%%

%\usepackage[notcite,notref]{showkeys}
%\usepackage{showlabels}
%\usepackage{color}
%\usepackage{hyperref}
%\usepackage{tikz-cd}
\usepackage{tikz}

% style of text
\numberwithin{equation}{section}
\footskip=20pt %for numbering of pages

\usepackage[a4paper]{geometry}
\geometry{
    tmargin= 3cm, %3cm, %1.3in, %3.5cm, %2.65cm,
    bmargin= 2.5cm, %2.3cm, %1.1in, %3cm, %2.5cm,
    rmargin= 2.5cm, %2.3cm, %1.1in, %2.75cm, %2.3cm,
    lmargin= 2.5cm %2.3cm %1.1in %2.75cm, %2.3cm
    }

% quoting: if quoting package is not applicable, mute the following orders and substitute quoting <----> quote
\usepackage{quoting}
\quotingsetup{vskip=.1in}
\quotingsetup{leftmargin=.17in}
\quotingsetup{rightmargin=.17in}
%%%%%

%%%%%%%%%% begin macros %%%%%%%%%%%%%%%
%
%      Cites in bold rather than roman.
\makeatletter
\def\@cite#1#2{{\m@th\upshape\bfseries%
[{#1\if@tempswa{\m@th\upshape\mdseries, #2}\fi}]}}
\makeatother
%
% Proclamation definitions in the most emphatic (plain) style:
\theoremstyle{plain}
\newtheorem{theorem}{Theorem}[section]

\newtheorem{proposition}[theorem]{Proposition}
\newtheorem{lemma}[theorem]{Lemma}
% Proclamation definitions in the less emphatic (definition) style:
\theoremstyle{definition}
\newtheorem{definition}[theorem]{Definition}

\newtheorem{remark}[theorem]{Remark}

\newtheorem*{acknow}{Acknowledgements}
% Proclamation definitions in the least emphatic (remark) style:
\theoremstyle{remark}
%\newtheorem{remark}{Remark}

%      Proof environment

% Change first-level `enumerate' numbering from arabic to roman.
%

% properly align :=
\mathtoolsset{centercolon}
%
%      Capital script letters
  
  \newcommand{\B}{{\mathcal{B}}}

  \newcommand{\F}{{\mathcal{F}}}

  \newcommand{\K}{{\mathcal{K}}}
\renewcommand{\L}{{\mathcal{L}}}
  
  \newcommand{\N}{{\mathcal{N}}}
\renewcommand{\O}{{\mathcal{O}}}
\renewcommand{\P}{{\mathcal{P}}}

  \newcommand{\T}{{\mathcal{T}}}

%Greek Letters

\def\al{\alpha}
\def\be{\beta}
\def\Ga{\Gamma}

\def\De{\Delta}
\def\de{\delta}

\def\la{\lambda}
\def\La{\Lambda}

%\def\eps{\epsilon}

%Roman letters for math

%Math capital letters

\newcommand{\bC}{\mathbb{C}}

\newcommand{\bT}{\mathbb{T}}
\newcommand{\bZ}{\mathbb{Z}}

%Gothic letters

%      Lower case bold letters

\newcommand{\Bi}{{\mathbf{i}}}

%      Text used in equations

\newcommand{\foral}{\text{ for all }}
\newcommand{\qand}{\quad\text{and}\quad}

\newcommand{\qfor}{\quad\text{for}\quad}
\newcommand{\qforal}{\quad\text{for all}\ }

%      Useful shortforms

\newcommand{\ca}{\mathrm{C}^*}

\newcommand{\ol}{\overline}
\newcommand{\wt}{\widetilde}

%      Better math forms

%      Operators

\newcommand{\Aut}{\operatorname{Aut}}

\newcommand{\id}{{\operatorname{id}}}

\newcommand{\spn}{\operatorname{span}}

\newcommand{\sumoplus}{\operatornamewithlimits{\sum \strut^\oplus}}
\newcommand{\supp}{\operatorname{supp}}

%Other
\newcommand{\sca}[1]{\left\langle#1\right\rangle} %\sca{a,b} =<a,b>
 %\nor{x}=||x||
\newcommand{\bo}[1]{\mathbf{#1}} %bold math symbols
\newcommand{\un}[1]{{\underline{#1}}} %underline math symbols
 %norm closure linear span
 % weak* closure linear span
%Subshifts

%Do not show subsections in the table of contents
\addtocontents{toc}{\protect\setcounter{tocdepth}{1}}

%%%%%%%%%%%%%%%%%%%%%%%%%%%%%%%%
\begin{document}
%%%%%%%%%%%%%%%%%%%%%%%%%%%%%%%%

%%%%%%%%%%%%%%%%%%%%%%%%%%%%%%%%
\title[Exactness and nuclearity of Nica-Pimsner algebras]{Finite dimensional approximations for \\ Nica-Pimsner algebras}

\author[E.T.A. Kakariadis]{Evgenios T.A. Kakariadis}
\address{School of Mathematics, Statistics and Physics\\ Newcastle University\\ Newcastle upon Tyne\\ NE1 7RU\\ UK}
\email{evgenios.kakariadis@ncl.ac.uk \hfill{$^\dagger$}}

\thanks{2010 {\it  Mathematics Subject Classification.} 46L08, 46L05}

\thanks{{\it Key words and phrases:} C*-correspondences, product systems, Nica-Pimsner algebras, exactness, nuclearity.}

%%%%%%%%%%%%%%%%%%%%%%%%%%%%%%%%
\begin{abstract}
We give necessary and sufficient conditions for nuclearity of Cuntz-Nica-Pimsner algebras for a variety of quasi-lattice ordered groups.
First we deal with the free abelian lattice case.
We use this as a stepping stone to tackle product systems over quasi-lattices that are controlled by the free abelian lattice and satisfy a minimality property.
Our setting accommodates examples like the Baumslag-Solitar lattice for $n=m>0$ and the right-angled Artin groups.
More generally the class of quasi-lattices for which our results apply is closed under taking semi-direct and graph products.

In the process we accomplish more.
Our arguments tackle Nica-Pimsner algebras that admit a faithful conditional expectation on a small fixed point algebra and a faithful copy of the co-efficient algebra.
This is the case for CNP-relative quotients in-between the Toeplitz-Nica-Pimsner algebra and the Cuntz-Nica-Pimsner algebra.
We complete this study with the relevant results on exactness.
\end{abstract}

\maketitle

%\tableofcontents

%%%%%%%%%%%%%%%%%%%%%%%%%%%%%%%%
\section{Introduction}
%%%%%%%%%%%%%%%%%%%%%%%%%%%%%%%%

A central theme in the C*-theory is the construction of operator algebras from a set of geometric/topological data and the investigation of their properties.
Understanding and expanding the class of nuclear C*-algebras is of particular interest for the classification programme in this respect. 
When the model uses nuclear building blocks and amenable algebraic methods then the C*-output \emph{should} be nuclear.
However nuclearity may be implied by non-nuclear building blocks as well, requiring a sharper analysis.
In this paper we identify necessary and sufficient conditions for nuclearity (and exactness) of a large class of Cuntz-type C*-algebras over quasi-lattices of central interest.
This question has been open for more than 15 years even for the honest free abelian lattice $(\bZ^N, \bZ_+^N)$.

%%%%%%%%%%%%%%%%%%%%%%%%%%%%%%%%
\subsection{Background}
%%%%%%%%%%%%%%%%%%%%%%%%%%%%%%%%

The starting point is Pimsner algebras \cite{Pim97} which quantize a range of $\bZ_+$-transformations through the apparatus of C*-corresponden\-ces.
There are two main classes of C*-algebras one can associate to a C*-correspondence $X$ over $A$: the Toeplitz-Pimsner algebra $\T(X)$ generated by the Fock representation, and the Cuntz-Pimsner algebra $\O(X)$ which is minimal in containing an isometric copy of the transformation.
In his influential work, Katsura \cite{Kat04} gave the correct $*$-relations that define $\O(X)$ and answered a range of questions that are now in the core of the theory.

Arveson \cite{Arv89} introduced the semigroup analogue of C*-correspondences, i.e., that of product systems.
Initially considered to understand semigroup actions of von Neumann algebras, by now they form a topic in its own respect that encompasses a wide range of constructs such as higher-rank graphs and generalized crossed products by (possibly non-invertible) dynamics.
Here one considers a semigroup $\{X_p\}_{p \in P}$ of C*-correspondences along with multiplication rules for a quasi-lattice $(G,P)$.
Fowler \cite{Fow02} realized that the Fock representation attains a universal property as long as the compact operators respect the quasi-lattice structure in the sense of Nica \cite{Nic92}.
That settled the Toeplitz-Nica-Pimnser algebras $\N\T(X)$, yet the Cuntz-analogue remained a mystery for some time.
Motivated by developments in higher-rank graphs, Sims-Yeend \cite{SY11} envisioned the Cuntz-Nica-Pimsner algebra $\N\O(X)$ through the augmented Fock space construction.
Later Carlsen-Larsen-Sims-Vittadelo \cite{CLSV11} resolved the problem of co-universality of $\N\O(X)$ under an injectivity condition and when a co-action is at hand.
Recently it has been shown by Dor-On and Katsoulis \cite{DK18a} that $\N\O(X)$ is minimal also in the sense of Arveson's C*-envelope for abelian lattices.

In the meantime a number of papers imported ideas from groupoids, dynamical systems and iterations to study further the Nica-Pimsner algebras when $(G,P)$ is amenable.
It has become apparent that exactness is more tractable for both $\N\T(X)$ and $\N\O(X)$.
The expectation is that each one is (and hence both are) exact if and only if so is the diagonal $A$.
This has been proven by several authors for general amenable quasi-lattices, e.g. Alabandik-Meyer \cite{AM15}, Rennie-Robertson-Sims \cite{RRS15}, Fletcher \cite{Fle17}.
Nuclearity behaves in a similar manner for $\N\T(X)$, but not for $\N\O(X)$ leaving open the following question:

\vspace{-2pt}
\begin{quoting}
\noindent {\bf Question.} Let $(G,P)$ be a pair that admits a co-action. 
What conditions on $X$ and/or $A$ are equivalent to nuclearity of $\N\O(X)$?
\end{quoting}

\vspace{-2pt}
\noindent
It is necessary to emphasize the importance of amenability here.
In its absence the problem becomes highly intractable.
Even for $A = \bC$, a definitive answer for nuclearity of semigroup algebras was not known until the recent breakthrough work of Li \cite{Li13}.

It has been verified that nuclearity of $A$ implies that of $\N\O(X)$ in several cases, e.g., higher-rank graphs \cite{KP00, KPS15}, C*-dynamics \cite{Kak17}, or in other more general amenable contexts \cite{AM15, RRS15, Seh18}.
However the converse is not true even at the level of $\bZ_+$.
Katsura \cite{Kat04} presents an example of Ozawa for a C*-correspondence whose $\O(X)$ is nuclear but $A$ is not.
Alongside he gave the correct equivalent, namely that the embedding $A \hookrightarrow \O(X)$ be nuclear.
It appears that this is also equivalent for $J_X \hookrightarrow \O(X)$ and $A/J_X$ be nuclear for Katsura's ideal of covariance $J_X$.
Amenability allows to reduce the problem to the fixed point algebra and $J_X$ is exactly the intersection of its positive part with $A$.

%%%%%%%%%%%%%%%%%%%%%%%%%%%%%%%%
\subsection{Main results}
%%%%%%%%%%%%%%%%%%%%%%%%%%%%%%%%

In the current paper we consider the nuclearity question for quasi-lattices that have been at the center of attention in the past years.
First we prove necessary and sufficient conditions for nuclearity for $\bZ_+^N$-product systems.
This is not an immediate consequence of the $\bZ_+$-results, but we need to improve upon Katsura's exhibition on nuclearity and short exact sequences \cite[Appendix]{Kat04}.
We treat this case separately as it allows to highlight our arguments and reduce the complexity of the proof for the general case.
For example one of the key ideas is that elements inside boxes have a common minimal.
Secondly we abstract our methods to accommodate more general quasi-lattices that attain a $\bZ_+^N$-controlled map in the sense of Crisp-Laca \cite{CL07} and inherit the aforementioned $\bZ_+^N$-minimality (see Definition \ref{D:minimality}).
This class includes quasi-lattices such as the Baumslag-Solitar group for $n=m>0$ and the right-angled Artin groups, while we show that it is closed under taking semi-direct products and graph products.

\begin{proof}[\bf Theorem A] 
\textit{(Theorem \ref{T:nuc NO}, Theorem \ref{T:nuc con} and Proposition \ref{P:CNP is in})
Let $\vartheta \colon (G,P) \to (\bZ^N, \bZ_+^N)$ be a controlled map with the minimality property.
Set $I := A \cap B_{(\un{0}, \infty]}$ where $B_{(\un{0}, \infty]}$ is the positive part of the fixed point algebra $\N\O(X)^\be$.
Then the following are equivalent:
\begin{enumerate}
\item
$A / I$ is nuclear and the embedding $I \hookrightarrow {B}_{(\un{0},\infty]}$ is nuclear.
\item
The embedding $A \hookrightarrow \N\O(X)$ is nuclear;
\item
$\N\O(X)$ is nuclear. \qedhere
\end{enumerate}
}
\end{proof}

By a usual gauge-action argument we may replace $\N\O(X)$ with the $\bT^N$-fixed point algebra $\N\O(X)^\be$ in items (ii) and (iii).
Our initial motivation was to explore $\N\O(X)$.
Nevertheless, we are able to tackle more general $\bT^N$-equivariant quotients $\N\P(X)$ of $\N\T(X)$ as long as they admit an isometric copy of $A$ and a map
\begin{equation}
E \colon \N\P(X) \to \ol{\spn}\{t(X_p) t(X_p)^* \mid p \in P\},
\end{equation}
such that $E \otimes_{\max} \id_D$ and $E \otimes \id_D$ are faithful conditional expectations for every C*-algebra $D$.
This assumption is satisfied for the CNP-relative quotients in-between $\N\T(X)$ and $\N\O(X)$ (Proposition \ref{P:CNP is in}).
The nuclearity result is new even for the relative Cuntz-Pimsner algebras of C*-correspondences of Muhly-Solel \cite{MS98}, i.e., when $N=1$.

The ideal $A \cap B_{(\un{0}, \infty]}$ plays a fundamental role in our central theorem and there is a wide class for which we can have a description, namely for \emph{strong compactly aligned product systems over $\bZ_+^N$}. 
Indeed by solving polynomial equations in \cite{DK18b} we have
\begin{equation}
A \cap B_{(\un{0}, \infty]} \subseteq (\bigcap_{i=1}^N \ker \phi_{\Bi})^\perp \cap (\bigcap_{i=1}^{N} \phi_{\Bi}^{-1}(\K X_{\Bi}))
\end{equation}
for this class of $\bZ_+^N$-product systems.
In particular equality holds for $\N\O(X)$.

Finally we show that exactness of $A$ is equivalent to exactness of $\N\T(X)$ and thus of any quotient $\N\P(X)$ that admits an isometric copy of $A$ without any additional hypothesis (Theorem \ref{T:exact} and Theorem \ref{T:ex con}).
Our proof is based on splitting the fixed point algebra.

%%%%%%%%%%%%%%%%%%%%%%%%%%%%%%%%
\subsection*{Organization of the paper}
%%%%%%%%%%%%%%%%%%%%%%%%%%%%%%%%

After giving preliminaries in Section 2, in Section 3 we prove the general C*-results that are required.
In Section 4 we show nuclearity for the $(\bZ^N, \bZ_+^N)$ case.
In Section 5 we use these results to tackle quasi-lattices that attain $\bZ_+^N$-controlled maps.
In Section 6 we give examples of quasi-lattices that fall within our theorem.
In Section 4 and in Section 5 we provide the analogues for exactness for which we do not require $\bZ_+^N$-minimality.

%%%%%%%%%%%%%%%%%%%%%%%%%%%%%%%%
\section{Preliminaries}
%%%%%%%%%%%%%%%%%%%%%%%%%%%%%%%%

%%%%%%%%%%%%%%%%%%%%%%%%%%%%%%%%
\subsection{C*-correspondences}
%%%%%%%%%%%%%%%%%%%%%%%%%%%%%%%%

We need to fix notation for C*-correspondences.
We mainly follow \cite{Kat04, Lan95}.
A \emph{C*-correspondence} $X$ over $A$ is a right Hilbert module over $A$ with a left action given by a $*$-homomorphism $\phi_X \colon A \to \L X$.
We write $\L X$ and $\K X$ for the adjointable operators and the compact operators of $X$, respectively.
For two C*-corresponden\-ces $X, Y$ over the same $A$ we write $X \otimes_A Y$ for the balanced tensor product over $A$.
We say that $X$ is unitarily equivalent to $Y$ (symb. $X \simeq Y$) if there is a surjective adjointable operator $U \in \L(X,Y)$ such that $\sca{U \xi, U \eta} = \sca{\xi, \eta}$ and $U (a \xi b) = a U(\xi) b$ for all $\xi, \eta \in X$ and $a,b \in A$.

A \emph{representation} $(\pi,t)$ of a C*-correspondence is a left module map that preserves the inner product.
Then $(\pi,t)$ is automatically a bimodule map.
Moreover there exists a $*$-homomorphism $\psi$ on $\K X$ such that $\psi(\theta_{\xi, \eta}) = t(\xi) t(\eta)^*$ for all $\theta_{\xi, \eta} \in \K X$. When $\pi$ is injective, then both $t$ and $\psi$ are isometric.

The \emph{Toeplitz-Pimsner algebra $\T_X$} is the universal C*-algebra with respect to the representations of $X$.
The \emph{Cuntz-Pimsner algebra $\O_X$} is the universal C*-algebra with respect to the representations that satisfy in addition $\pi(a) = \psi(\phi_X(a))$ for all $a \in J_X$, for Katsura's ideal
\[
J_X := \ker\phi_X^\perp \bigcap \phi_X^{-1}(\K X).
\]

%%%%%%%%%%%%%%%%%%%%%%%%%%%%%%%%
\subsection{Product systems}
%%%%%%%%%%%%%%%%%%%%%%%%%%%%%%%%

All groups we consider are discrete.
Let $P$ be a sub-semigroup of a group $G$ such that $P \cap P^{-1} = \{e\}$.
Then $P$ defines a partial order on $G$ given by: $g \leq h$ if and only if $g^{-1} h \in P$.
The pair $(G,P)$ is a \emph{quasi-lattice ordered group} if any two elements $p, q \in G$ with common upper bound in $P$ have a least common upper bound $p\vee q$ in $P$.
We write $p\vee q = \infty$ when $p,q$ have no common upper bound in $P$, and write $p \vee q < \infty$ otherwise.
A set $F \subseteq P$ is called {$\vee$-closed} if $p \vee q \in F$ whenever $p, q \in F$ with $p \vee q <\infty$.

From now on fix $(G,P)$ be a quasi-lattice ordered group.
A \emph{product system $X$ over $P$} is a family $\{X_p \mid p \in P\}$ of C*-correspondences over the same C*-algebra $A$ such that
\begin{enumerate}
\item $X_e = A$;
\item there are multiplication rules $X_p \otimes_A X_q \simeq X_{pq}$ for every $p, q \in P \setminus \{e\}$.
\end{enumerate}
We will suppress the use of symbols for the multiplication rules.
Hence we write $\xi_p \xi_q$ for the image of $\xi_p \otimes \xi_q$ and so
\[
\phi_{pq}(a)(\xi_p \xi_q) = (\phi_p(a) \xi_p) \xi_q \foral a \in A \text{ and } \xi_p \in X_p, \xi_q \in X_q.
\]
More generally, the product system structure gives maps
\[
i_{p}^{pq} \colon \L X_{p} \to \L X_{pq}
\; \textup{ such that } \;
i_{p}^{pq}(S) (\xi_{p} \xi_{q})
=
(S \xi_{p}) \xi_{q}.
\]
Following Fowler's work \cite{Fow02}, a product system is called \emph{compactly aligned} if
\[
i_{p}^{p \vee q}(S) i_{q}^{p \vee q}(T) \in \K X_{p \vee q} \text{ whenever } S \in \K X_{p}, T \in \K X_{q} \text{ and } p \vee q < \infty.
\]
A \emph{Nica-covariant representation $(\pi, t)$} of a product system $X$ consists of a family of representations $(\pi, t_{p})$ of $X_{p}$ over $A$ that respect the multiplication of the product system in the sense that:
\[
t_{p q}(\xi_p \xi_q) = t_p(\xi_p) t_q(\xi_q) \foral \xi_p \in X_p, \xi_q \in X_q,
\]
and satisfy the \emph{Nica-covariance} for $S \in \K X_{p}$ and $T \in \K X_{q}$:
\[
\psi_{p}(S) \psi_{q}(T) = 
\begin{cases}
\psi_{p \vee q} (i_{p}^{p \vee q}(S) i_{q}^{p \vee q}(T)) & \text{ if } p\vee q < \infty, \\
0 & \text{ if } p \vee q = \infty.
\end{cases}
\]
In particular if $p \vee q = \infty$ then $t_p(\xi_p)^* t_q(\xi_q) = 0$, whereas if $p \vee q < \infty$ then
\[
t_p(\xi_p)^* t_q(\xi_q) \in \ol{t_r(X_r) t_s(X_s)^*} \qfor r = p^{-1}(p \vee q), s = q^{-1}( p \vee q).
\]
Hence the C*-algebra $\ca(\pi,t)$ generated by $\pi(A)$ and $t_p(X_p)$ can be written as
\[
\ca(\pi,t) = \ol{\spn}\{t_p(\xi_p) t_q(\xi_q)^* \mid \xi_p \in X_p, \xi_q \in X_q \textup{ and } p,q \in P\}.
\]
For a finite $F \subseteq P$ that is $\vee$-closed we write
\[
B_F := \spn\{ \psi_{p}(k_{p}) \mid k_p \in \K X_p, p \in F\}.
\]
It follows from Fowler's work \cite{Fow02} that the $B_F$ are closed C*-subalgebras of $\ca(\pi,t)$.
Moreover we write
\[
B_{(e, \infty]} := \ol{\spn} \{ \psi_p(k_p) \mid k_p \in \K X_p, e \neq p \in P \}
\qand
B_{[e, \infty]} := \pi(A) + B_{(e, \infty]}.
\]
We refer to these sets as the \emph{cores} of the representation $(\pi,t)$.

The \emph{Toeplitz-Nica-Pimsner} algebra $\N\T(X)$ is the universal C*-algebra generated by $A$ and $X$ with respect to the representations of $X$.
The Fock space representation of Fowler \cite{Fow02} ensures that $A$, and thus $X$, embeds isometrically in $\N\T(X)$.
In short, let $\F(X) = \sum^\oplus_{q \in P} X_q$ and for $a \in A$ and $\xi_p \in X_p$ define
\[
\pi(a) \xi_q = \phi_q(a) \xi_q
\qand
t(\xi_p) \xi_q = \xi_p \xi_q
\qforal
\xi_q \in X_q.
\]
Then the Fock representation $(\pi, t)$ is Nica-covariant and by taking the compression at the $(e, e)$-entry we see that $\pi$, and thus $t$, is injective.

Among the quotients of $\N\T(X)$ there is one of particular importance that generalizes the one-dimensional Cuntz-Pimsner algebra and was identified by Sims-Yeend \cite{SY11}.
Let 
\[
I_{e} := A 
\qand 
I_{q} = \bigcap \{ \ker \phi_{s} \mid s \leq q \}
\text{ for } e \neq q \in P,
\]
all of which are ideals in $A$. 
For $r \in P$, let
\[
\widetilde{X}_{r} = \bigoplus \{ X_{p} I_{q} \mid p, q \in P \text{ such that } p q = r \},
\]
and write $\wt\phi_{r}$ for the left action on $\widetilde{X}_{r}$. 
The product system $X$ is called \emph{$\wt\phi$-injective} if every $\wt\phi_r$ is injective.
Consequently, for $p \leq r$ we obtain a $*$-homomor\-phism 
\[
\widetilde{i}_{p}^{r} \colon \L X_{p} \rightarrow \L \widetilde{X}_{r}
\; \textup{ with } \;
\widetilde{i}_{p}^{r} := \oplus \{ i_{p}^{q} \mid p \leq q \leq r \}.
\]
If $p \not\leq r$ then we set $\widetilde{i}_{p}^{r} = 0$.
For a finite set $F \subset P$ we say that a collection $\{k_p\}_{p \in F}$ satisfies
\begin{equation}\label{eq:ast}
\sum \{ \, \widetilde{i}_{p}^{r}(k_{p}) \mid p \in F \, \} = 0 \textup{ for large $r$},
\end{equation}
if for every $s \in P$ there exists $s \leq x \in P$ such that equation (\ref{eq:ast}) holds for all $r \geq x$.
Let $\F_{\textup{CNP}}$ be the set of all such $\{k_p\}_{p \in F}$.
A Nica-covariant representation $(\pi,t)$ of $X$ will be called \emph{Cuntz-Nica-Pimsner-covariant} if:
\[
\sum \{ \psi_{p}(k_{p}) \mid p \in F \} = 0 \foral \{k_p\}_{p \in F} \in \F_{\textup{CNP}}.
\]
The \emph{Cuntz-Nica-Pimsner} algebra $\N\O(X)$ of Sims-Yeend is the universal C*-algebra generated by $A$ and $X$ with respect to the CNP-representations of $X$.
By using the augmented Fock space, Sims-Yeend \cite[Theorem 4.1]{SY11} show that if $X$ is $\wt\phi$-injective then $A$, and thus $X$, embeds isometrically in $\N\O(X)$.
In particular, this is the case for pairs $(G,P)$ with the property that every bounded subset of $P$ contains a maximal element \cite[Lemma 3.15]{SY11}.
For $\F \subset \F_{\textup{CNP}}$ we can form the quotient of $\N\T(X)$ be the universal C*-algebra with respect to the representations that are CNP on $\F$, i.e.
\[
\sum \{ \psi_{p}(k_{p}) \mid p \in F \} = 0 \foral \{k_p\}_{p \in F} \in \F.
\]
Such a quotient will be called \emph{$\F$-CNP-relative} or just \emph{CNP-relative}.
It follows that CNP-relative quotients factor through $\N\T(X) \to \N\O(X)$.

%%%%%%%%%%%%%%%%%%%%%%%%%%%%%%%%
\section{Short exact sequences}
%%%%%%%%%%%%%%%%%%%%%%%%%%%%%%%%

We will require some technical lemmas about short exact sequences.
Recall that for C*-algebras $A$ and $D$, we denote by $A\otimes D$ (resp. $A \otimes_{\max} D$) the \emph{minimal} (resp. \emph{maximal}) tensor product of $A$ and $D$, and by $A \ominus D$ the kernel of the natural surjection $\pi_{A,D} \colon A\otimes_{\max} D \rightarrow A \otimes D$. 
By definition 
\[
\pi_{B,D} \circ (\varphi \otimes_{\max} \id_D) = (\varphi \otimes \id_D) \circ \pi_{A,D}
\]
for any $*$-homomorphism $\varphi \colon A \to B$.
Thus the restriction of $\varphi \otimes_{\max} \id_D$ to $A \ominus D$ induces a map $\varphi \ominus \id_D \colon A\ominus D \to B \ominus D$.
Recall that $\varphi$ is \emph{nuclear} if and only if $\varphi \ominus \id_D = 0$ for any C*-algebra $D$.
Moreover $A$ is \emph{nuclear}, if and only if $\id_A$ is nuclear, if and only if $A \ominus D = (0)$ for any C*-algebra $D$.
We will constantly refer to some central results from \cite[Appendices]{Kat04}.
We will also need some variation of \cite[Proposition A.6]{Kat04}. 
For one direction we have the following.

%%%%%%%%%%%%%%%%%%%%%%%%%%%%%%%%
\begin{proposition} \label{P:nuc-quot}
Let $A,A'$ be C*-algebras and let the ideals $I \lhd A$ and $I'\lhd A'$. 
Suppose we have the following commutative diagram of short exact sequences
\[
\xymatrix{
0 \ar[r] & I \ar[r] \ar[d]^{\varphi_0} & A \ar[r] \ar[d]^{\varphi} & A / I \ar[r] \ar[d]^{\widetilde{\varphi}} & 0\\
0 \ar[r] & I' \ar[r] & A' \ar[r] & A' / I' \ar[r] & 0
}
\]
where $\varphi \colon A \rightarrow A'$ is an injective $*$-homomorphism that satisfies $\varphi(I) \subseteq I'$, $\widetilde{\varphi} \colon A / I \rightarrow A' / I'$  is the induced map and $\varphi_0 :=\varphi|_{I}$. 
If $\varphi \colon A\rightarrow A'$ is nuclear, then $\varphi_0$ and $\widetilde{\varphi}$ are both nuclear.
\end{proposition}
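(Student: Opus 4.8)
The plan is to work entirely with the functor $D\mapsto(-)\ominus D$ and with the characterisation that a $*$-homomorphism is nuclear exactly when the induced map on the $\ominus$-terms vanishes for every $D$, exploiting that the \emph{maximal} tensor product keeps the two rows exact while the \emph{minimal} one need not. Fix an auxiliary C*-algebra $D$ and write $q\colon A\to A/I$ and $q'\colon A'\to A'/I'$ for the quotient maps. The one input I would take from \cite[Appendix]{Kat04} is the reformulation of nuclearity of $\varphi$ as the existence, for each $D$, of a (necessarily unique) $*$-homomorphism
\[
\Phi_D\colon A\otimes D\longrightarrow A'\otimes_{\max}D, \qquad \Phi_D\circ\pi_{A,D}=\varphi\otimes_{\max}\id_D,
\]
equivalently $\ker\pi_{A,D}\subseteq\ker(\varphi\otimes_{\max}\id_D)$, i.e. $\varphi\ominus\id_D=0$.

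For $\varphi_0$ I would argue by restriction. With $\iota\colon I\hookrightarrow A$ the inclusion, $\iota\otimes_{\max}\id_D$ identifies $I\otimes_{\max}D$ with an ideal of $A\otimes_{\max}D$, and a short chase using injectivity of $\iota\otimes\id_D$ gives $I\ominus D=(I\otimes_{\max}D)\cap(A\ominus D)$. Under this identification $\varphi_0\ominus\id_D$ is just the restriction of $\varphi\ominus\id_D$ to $I\ominus D$. Since $\varphi$ is nuclear, $\varphi\ominus\id_D=0$, so its restriction vanishes and $\varphi_0\ominus\id_D=0$ for all $D$; hence $\varphi_0$ is nuclear. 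This direction is routine and uses neither injectivity of $\varphi$ nor exactness of any row.

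For $\widetilde\varphi$ I would push $\Phi_D$ to the quotients. Lifting elements of $I\otimes D$ to $I\otimes_{\max}D$ and using $\varphi(I)\subseteq I'$ gives $\Phi_D(I\otimes D)\subseteq I'\otimes_{\max}D$; moreover $\pi_{A',D}\circ\Phi_D=\varphi\otimes\id_D$ is injective (here is where I use that $\varphi$ is injective), so $\Phi_D$ itself is injective. The goal is the inclusion
\[
\Phi_D\big(\ker(q\otimes\id_D)\big)\subseteq I'\otimes_{\max}D .
\]
Granting it, $(q'\otimes_{\max}\id_D)\circ\Phi_D$ annihilates $\ker(q\otimes\id_D)$ and therefore descends to a $*$-homomorphism $\widehat\Phi_D\colon(A/I)\otimes D\to(A'/I')\otimes_{\max}D$ with $\widehat\Phi_D\circ\pi_{A/I,D}=\widetilde\varphi\otimes_{\max}\id_D$ (using surjectivity of $q\otimes_{\max}\id_D$ from maximal exactness). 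This factorisation is precisely $\widetilde\varphi\ominus\id_D=0$, i.e. nuclearity of $\widetilde\varphi$.

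The displayed inclusion is the hard part, and this is where I expect the main obstacle. For non‑exact $D$ the minimal tensor product need not be exact, so $\ker(q\otimes\id_D)$ may strictly contain $I\otimes D$; on $I\otimes D$ the inclusion is immediate, but the residual defect $\ker(q\otimes\id_D)/(I\otimes D)$ must also be absorbed into the ideal $I'\otimes_{\max}D$. Injectivity of $\varphi$ is exactly what I would exploit to do this: it forces $\Phi_D$ injective with $\pi_{A',D}\circ\Phi_D$ injective, and combining this with exactness of the two maximal rows is what pins the image of $\ker(q\otimes\id_D)$ inside $I'\otimes_{\max}D$. Controlling this defect is the technical heart of the statement and is the point at which the variation of \cite[Proposition A.6]{Kat04} is needed; everything else reduces to formal diagram chasing.
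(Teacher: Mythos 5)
Your argument for $\varphi_0$ is correct and is in substance the paper's own (a chase around the left-hand square, using that $I'\otimes_{\max}D\to A'\otimes_{\max}D$ is injective). The gap is in the $\widetilde\varphi$ half. Reducing nuclearity of $\widetilde\varphi$ to the inclusion $\Phi_D\bigl(\ker(q\otimes\id_D)\bigr)\subseteq I'\otimes_{\max}D$ is fine, but you leave precisely this inclusion unproved, and the mechanism you gesture at (injectivity of $\Phi_D$ plus exactness of the maximal rows) does not yield it. Following your own setup: for $w\in\ker(q\otimes\id_D)$ one gets $\pi_{A',D}(\Phi_D(w))=(\varphi\otimes\id_D)(w)\in\ker(q'\otimes\id_D)$, hence only that $(q'\otimes_{\max}\id_D)(\Phi_D(w))$ lies in $(A'/I')\ominus D$; showing that this element vanishes is equivalent to the statement being proved, so the argument is circular, and injectivity of $\Phi_D$ is never actually brought to bear on where its image sits relative to the ideal $I'\otimes_{\max}D$.

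The idea you are missing is the opening line of the paper's proof: because $\varphi$ is injective \emph{and} nuclear, $A$ is exact (compose with a faithful representation of $A'$; nuclear embeddability characterizes exactness). Exactness of $A$ implies --- by Kirchberg's theorem that exact C*-algebras have property $C''$, which is what \cite[Lemma A.5]{Kat04} packages --- that the minimal tensor sequence $0\to I\otimes D\to A\otimes D\to (A/I)\otimes D\to 0$ is exact for every $D$. Hence $\ker(q\otimes\id_D)=I\otimes D$: the ``residual defect'' you worry about is zero, and your own remark that the inclusion is immediate on $I\otimes D$ then completes the proof. (The paper runs this instead as a diagram chase: exactness of $A$ makes $A\ominus D\to (A/I)\ominus D$ surjective, and since the middle vertical map $\varphi\ominus\id_D$ is zero, commutativity of the right-hand square forces $\widetilde\varphi\ominus\id_D=0$.) Note that injectivity of $\varphi$ enters the proposition only through this exactness step; without it the hypothesis cannot be exploited at all.
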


\begin{proof}
Fix a C*-algebra $D$. Since $\varphi$ is an injective nuclear map then $A$ is exact.
Hence $\varphi \ominus \id_D = 0$, and by \cite[Lemma A.5]{Kat04} we get
\begin{align*}
\xymatrix@C=1.3cm{
0 \ar[r] & I \ominus D \ar[r] \ar[d]^{\varphi_0 \ominus \id_D} & A \ominus D \ar[r]^{\pi \ominus \id_D} \ar[d]^{0} & (A / I) \ominus D \ar[r] \ar[d]^{\widetilde{\varphi} \ominus \id_D} & 0\\
0 \ar[r] & I' \ominus D \ar[r] & A' \ominus D \ar[r] & (A' / I') \ominus D .
}
\end{align*}
By commutation of the left square and exactness on $I' \ominus D$, we must have that $\varphi_0 \ominus \id_D =0$ so that $\varphi_0$ is nuclear. 
By commutation of the right square and exactness on $(A/I)\ominus D$, we must have that $\wt\varphi \ominus \id_D = 0$ so that $\wt\varphi$ is nuclear. 
\end{proof}

The ideals $I, I'$ we will be using are in good position in the sense that $I \subseteq I'$ and there is an approximate identity for $I'$, such that multiplying it by an element in $A$ lands us inside $I$.
This will give \emph{a} converse to Proposition \ref{P:nuc-quot}.
We require the next technical lemma.

%%%%%%%%%%%%%%%%%%%%%%%%%%%%%%%%
\begin{lemma} \label{L:max-ideal-inc}
Let $A, A'$ be C*-algebras and let the ideals $I \lhd A$ and $I'\lhd A'$. 
Suppose that $\varphi \colon A \rightarrow A'$ is an injective $*$-homomorphism such that $\varphi(I) \subseteq I'$ and there exists an approximate identity $(e_{\la})_\la$ of $I'$ such that $\varphi(a)e_{\la} \in \varphi(I)$ for all $a\in A$. 
Then 
\begin{equation*}
\varphi \otimes_{\max} \id_D (A \otimes_{\max} D) \bigcap I' \otimes_{\max} D 
\subseteq 
\varphi \otimes_{\max} \id_D (I \otimes_{\max} D)
\end{equation*}
for any C*-algebra $D$.
\end{lemma}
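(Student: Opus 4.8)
The plan is to write $\Phi := \varphi \otimes_{\max} \id_D$ and to lean on the exactness of the maximal tensor product, which realizes $I \otimes_{\max} D$ and $I' \otimes_{\max} D$ as (closed, two-sided) ideals of $A \otimes_{\max} D$ and $A' \otimes_{\max} D$ respectively; this is exactly what gives meaning to the inclusion in the statement. Since $\Phi$ restricts to a $*$-homomorphism on the C*-subalgebra $I \otimes_{\max} D$, the image $\Phi(I \otimes_{\max} D)$ is automatically closed. This closedness is the mechanism that will turn an approximation argument into an exact membership.

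Fix $x$ in the left-hand side, so that $x = \Phi(y)$ for some $y \in A \otimes_{\max} D$ and $x \in I' \otimes_{\max} D$. First I would observe that the given approximate identity $(e_\lambda)_\lambda$ of $I'$ yields a right approximate identity $(e_\lambda \otimes 1)_\lambda$ for the ideal $I' \otimes_{\max} D$: indeed $\|e_\lambda \otimes 1\| \leq 1$ and $z(e_\lambda \otimes 1) \to z$ first for $z$ in the algebraic tensor product $I' \odot D$ (termwise, since $c e_\lambda \to c$ for $c \in I'$) and then for all $z \in I' \otimes_{\max} D$ by density and uniform boundedness. In particular $x(e_\lambda \otimes 1) \to x$.

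The heart of the argument is the claim that $\Phi(y)(e_\lambda \otimes 1) \in \Phi(I \otimes_{\max} D)$ for every $\lambda$ and every $y \in A \otimes_{\max} D$. I would verify this first on an element $y_0 = \sum_i a_i \otimes d_i$ of the algebraic tensor product $A \odot D$: here $\Phi(y_0)(e_\lambda \otimes 1) = \sum_i \varphi(a_i) e_\lambda \otimes d_i$, and the hypothesis $\varphi(a_i) e_\lambda \in \varphi(I)$ lets me choose $b_i \in I$ with $\varphi(b_i) = \varphi(a_i) e_\lambda$, whence $\Phi(y_0)(e_\lambda \otimes 1) = \Phi(\sum_i b_i \otimes d_i)$ with $\sum_i b_i \otimes d_i \in I \odot D \subseteq I \otimes_{\max} D$. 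For general $y$ I approximate it in norm by such $y_0$; continuity of $\Phi$ and of right multiplication by $e_\lambda \otimes 1$ gives $\Phi(y_0)(e_\lambda \otimes 1) \to \Phi(y)(e_\lambda \otimes 1)$, and since each term lies in the closed set $\Phi(I \otimes_{\max} D)$, so does the limit. Combining with the previous paragraph, $x(e_\lambda \otimes 1) \in \Phi(I \otimes_{\max} D)$ for all $\lambda$ while $x(e_\lambda \otimes 1) \to x$, so $x \in \Phi(I \otimes_{\max} D)$ by closedness, which is the desired inclusion.

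The step I expect to be most delicate is the claim $\Phi(y)(e_\lambda \otimes 1) \in \Phi(I \otimes_{\max} D)$ for non-elementary $y$. The tempting route -- to define a linear map $a \mapsto b$ with $\varphi(b) = \varphi(a) e_\lambda$ and tensor it with $\id_D$ -- fails, because such a map is merely bounded and linear, not a $*$-homomorphism or a completely positive map, so it has no reason to extend to the maximal tensor product. The fix is precisely to avoid extending it: I use the hypothesis only on the algebraic tensor product, where everything is a finite sum, and then let the closedness of the $*$-homomorphic image $\Phi(I \otimes_{\max} D)$ absorb the limit. The only external input is the exactness of the maximal tensor product used at the outset to realize $I \otimes_{\max} D$ and $I' \otimes_{\max} D$ as ideals (see \cite[Appendices]{Kat04}); note that injectivity of $\varphi$ is not actually needed beyond the ambient hypotheses, since only the existence of the lifts $b_i$ is used.
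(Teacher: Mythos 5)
Your proof is correct and follows essentially the same route as the paper's: approximate the preimage by elements of the algebraic tensor product, apply the hypothesis $\varphi(a)e_{\lambda} \in \varphi(I)$ termwise, and absorb the limits into $(\varphi \otimes_{\max} \id_D)(I \otimes_{\max} D)$, which is closed as the $*$-homomorphic image of a C*-algebra. The only cosmetic difference is that the paper multiplies by $e_{\lambda} \otimes f_{\mu}$, where $(f_{\mu})_{\mu}$ is an approximate identity of $D$, thereby staying inside $I' \otimes_{\max} D$ and sidestepping the (standard, but unremarked) multiplier interpretation of $e_{\lambda} \otimes 1$ that your argument needs when $D$ is non-unital.
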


\begin{proof}
Let $y \in \varphi \otimes_{\max} \id_D (A \otimes_{\max} D) \bigcap I' \otimes_{\max} D$, and choose $x \in A \otimes_{\max} D$ such that $\varphi \otimes_{\max} \id_D(x) = y$. 
By approximation from $A\odot D$ we can write
\begin{align*}
x = \lim_{\kappa} \sum_{i=1}^{n_{\kappa}} a_i^{(\kappa)} \otimes d_i^{(\kappa)} \qfor a_i^{(\kappa)} \in A, d_i^{(\kappa)} \in D.
\end{align*}
Let $(f_\mu)_\mu$ be an approximate identity of $D$.
As $y \in I' \otimes_{\max} D \subseteq A' \otimes_{\max} D$ we then get
\begin{align*}
y
& = 
\lim_{\la, \mu} y (e_{\la}\otimes f_\mu) 
 = 
\lim_{\la, \mu} \varphi \otimes_{\max} \id_D(x) \cdot (e_{\la} \otimes f_\mu) 
 = 
\lim_{\la, \mu} \lim_{\kappa} \sum_{i=1}^{n_{\kappa}} \varphi(a_i^{(\kappa)}) e_{\la} \otimes d_i^{(\kappa)} f_\mu.
\end{align*}
By assumption $\varphi(a_i^{(\kappa)})e_{\la} \in \varphi(I)$, and thus $y \in \varphi \otimes_{\max} \id_D (I \otimes_{\max} D)$.
\end{proof}

%%%%%%%%%%%%%%%%%%%%%%%%%%%%%%%%
\begin{proposition} \label{P:nuc-ext}
Let $A, A'$ be C*-algebras and let the ideals $I \lhd A$ and $I'\lhd A'$. 
Suppose we have the following commutative diagram of short exact sequences
\[
\xymatrix{
0 \ar[r] & I \ar[r] \ar[d]^{\varphi_0} & A \ar[r] \ar[d]^{\varphi} & A / I \ar[r] \ar[d]^{\widetilde{\varphi}} & 0\\
0 \ar[r] & I' \ar[r] & A' \ar[r] & A' / I' \ar[r] & 0
}
\]
where $\varphi \colon A \rightarrow A'$ is an injective $*$-homomorphism that satisfies $\varphi(I) \subseteq I'$, $\widetilde{\varphi} \colon A / I \rightarrow A' / I'$  is the induced map and $\varphi_0 :=\varphi|_{I}$. 
Suppose further that there exists an approximate identity $(e_{\la})_\la$ of $I'$ such that $\varphi(a)e_{\la} \in \varphi(I)$ for all $a\in A$. 
If $\varphi_0$ and the induced map $\widetilde{\varphi}$ are nuclear, then so is $\varphi$.
\end{proposition}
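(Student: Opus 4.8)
The plan is to show directly that $\varphi \ominus \id_D = 0$ for every C*-algebra $D$, which by the discussion preceding the statement is equivalent to nuclearity of $\varphi$. Throughout I write $q \colon A \to A/I$ and $q' \colon A' \to A'/I'$ for the quotient maps and $\iota \colon I \hookrightarrow A$, $\iota' \colon I' \hookrightarrow A'$ for the inclusions, so that $q' \varphi = \widetilde\varphi q$ and $\varphi \iota = \iota' \varphi_0$. Fix $D$ and an element $x \in A \ominus D = \ker \pi_{A,D}$, set $y := \varphi \otimes_{\max} \id_D(x)$, and note that by the functoriality relation $\pi_{A',D} \circ (\varphi \otimes_{\max} \id_D) = (\varphi \otimes \id_D) \circ \pi_{A,D}$ we already have $y \in A' \ominus D$. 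The goal is to prove $y = 0$.

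First I would push $y$ into the ideal part. Since the maximal tensor product is exact, the sequence $0 \to I' \otimes_{\max} D \to A' \otimes_{\max} D \xrightarrow{q' \otimes_{\max} \id_D} (A'/I') \otimes_{\max} D \to 0$ is exact, so to conclude $y \in I' \otimes_{\max} D$ it suffices to check $(q' \otimes_{\max} \id_D)(y) = 0$. Using $q'\varphi = \widetilde\varphi q$ this equals $(\widetilde\varphi \otimes_{\max} \id_D)(q \otimes_{\max} \id_D(x))$, and since $\pi_{A/I,D}(q \otimes_{\max} \id_D(x)) = (q \otimes \id_D)(\pi_{A,D}(x)) = 0$, the element $q \otimes_{\max} \id_D(x)$ lies in $(A/I) \ominus D$. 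Nuclearity of $\widetilde\varphi$ means $\widetilde\varphi \ominus \id_D = 0$, so this element is annihilated and $(q' \otimes_{\max} \id_D)(y) = 0$. Hence $y$ lies in $A' \ominus D \cap I' \otimes_{\max} D$, and being in the range of $\varphi \otimes_{\max} \id_D$ it falls under Lemma \ref{L:max-ideal-inc} (this is precisely where the approximate identity hypothesis enters), which supplies $w \in I \otimes_{\max} D$ with $\varphi \otimes_{\max} \id_D(w) = y$.

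The crux is then to upgrade this preimage $w$ from $I \otimes_{\max} D$ to $I \ominus D$, and this is where I expect the only genuine subtlety to lie. Applying $\pi_{A',D}$ to $y = \varphi \otimes_{\max} \id_D(w)$ and using $y \in A' \ominus D$ gives $(\varphi \otimes \id_D)(\pi_{A,D}(w)) = 0$. At this point I would invoke that $\varphi$ is injective, so that the minimal tensor product map $\varphi \otimes \id_D$ is again injective; this forces $\pi_{A,D}(w) = 0$, and since $\iota \otimes \id_D$ is injective as well, $\pi_{I,D}(w) = 0$, that is $w \in I \ominus D$. Finally, nuclearity of $\varphi_0$ gives $\varphi_0 \ominus \id_D = 0$, whence $\varphi_0 \otimes_{\max} \id_D(w) = 0$; feeding this through $\varphi \iota = \iota' \varphi_0$ yields $y = \varphi \otimes_{\max} \id_D(w) = (\iota' \otimes_{\max} \id_D)(\varphi_0 \otimes_{\max} \id_D(w)) = 0$.

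As $x$ and $D$ were arbitrary, this shows $\varphi \ominus \id_D = 0$ for every $D$, so $\varphi$ is nuclear. I anticipate the main obstacle to be exactly the passage to $w \in I \ominus D$: it is tempting to try to conclude straight from nuclearity of $\varphi_0$, but without first locating $w$ in $\ker \pi_{I,D}$ one only recovers that $y$ lies again in $I' \ominus D$, which is circular. Injectivity of the minimal tensor product of $\varphi$ is what breaks the loop, and the rest of the argument is bookkeeping with the functoriality of $\pi_{-,D}$ and the two exactness facts (maximal tensor exactness in paragraph two, minimal tensor injectivity in paragraph three).
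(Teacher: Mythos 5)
Your proof is correct and follows essentially the same route as the paper: push $\varphi \otimes_{\max} \id_D(x)$ into the ideal part using nuclearity of $\widetilde\varphi$ and exactness of $\otimes_{\max}$, lift it via Lemma \ref{L:max-ideal-inc}, upgrade the lift to $I \ominus D$ by injectivity of a minimal tensor map, and kill it with nuclearity of $\varphi_0$. The only (cosmetic) difference is that the paper locates the image in $I' \ominus D$ at once via \cite[Lemma A.5]{Kat04} and then uses injectivity of $\varphi_0 \otimes \id_D$, whereas you work with $A' \ominus D \cap (I' \otimes_{\max} D)$ and use injectivity of $\varphi \otimes \id_D$ and $\iota \otimes \id_D$ instead; both yield the same conclusion.
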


\begin{proof}
Fix a C*-algebra $D$. 
Since $\varphi_0$ and $\widetilde{\varphi}$ are nuclear, we see that $\varphi_0 \ominus \id_D = 0$ and $\widetilde{\varphi}\ominus \id_D = 0$. 
Then exactness of the maximal tensor product and \cite[Lemma A.5]{Kat04} yields
\begin{align*}
\xymatrix@C=1.5cm{
0 \ar[r] & I \ominus D \ar[r] \ar[d]^{0} & A \ominus D \ar[r] \ar[d]^{\varphi \ominus \id_D} & (A / I) \ominus D \ar[d]^{0} \\
0 \ar[r] & I' \ominus D \ar[r] & A' \ominus D \ar[r] & (A' / I') \ominus D.
}
\end{align*}
Let $x \in A \ominus D$.
By commutation of the right square and exactness on $A' \ominus D$, we see that $\varphi \otimes_{\max} \id_D(x) \in I' \ominus D$. 
By Lemma \ref{L:max-ideal-inc} there exists a $c \in I \otimes_{\max} D$ such that $\varphi_0 \otimes_{\max} \id_D(c) = \varphi \otimes_{\max} \id_D(x)$. 
However, since $\varphi \otimes_{\max} \id_D(x) \in I' \ominus D$, we get
\begin{align*}
(\varphi_0 \otimes \id_D) \circ \pi_{I,D} (c)
=
\pi_{I',D} \circ (\varphi_0 \otimes_{\max} \id_D) (c) 
=
\pi_{I',D} \circ (\varphi \otimes_{\max} \id_D)(x)
=
0.
\end{align*}
Injectivity of $\varphi_0 \otimes \id_D$ thus implies that $c \in \ker \pi_{I,D} \equiv I \ominus D$. 
Hence we get that
\[
\varphi \otimes_{\max} \id_D(x) = \varphi_0 \otimes_{\max} \id_D (c) = \varphi_0 \ominus \id_D (c) = 0.
\]
As this holds for all $x \in A \ominus D$ we conclude that $\varphi \ominus \id_D = 0$, and thus $\varphi$ is nuclear.
\end{proof}

%%%%%%%%%%%%%%%%%%%%%%%%%%%%%%%%
\section{$\bZ_+^N$-product systems}\label{S:ZN}
%%%%%%%%%%%%%%%%%%%%%%%%%%%%%%%%

For this section fix a product system $X$ over $(\bZ^N, \bZ_+^N)$ with coefficients in $A$.
A representation $(\pi,t)$ of $X$ admits a gauge action if there is a point-norm continuous homomorphism $\beta \colon \mathbb{T}^N \rightarrow \Aut(\ca(\pi,t))$ such that
\[
\be_{\un{z}}(t_\un{n}(\xi_{\un{n}})) = \un{z}^{\un{n}} \, t_{\un{n}}(\xi_{\un{n}})
\foral \xi_{\un{n}} \in X_{\un{n}} 
\qand 
\be_{\un{z}}(\pi(a)) = \pi(a)
\foral a \in A.
\]
Then the fixed point algebra $\ca(\pi,t)^\be$ coincides with $B_{[\un{0}, \infty]}$.
By universality, the C*-algebra $\N\T(X)$ attains a gauge action.
Moreover it follows that the Fock representation defines a faithful $*$-representation of $\N\T(X)$.

We will investigate nuclearity and exactness for quotients of $\N\T(X)$ that inherit this gauge action and attain a faithful copy of $A$, such as the Cuntz-Nica-Pimsner algebra $\N\O(X)$.
Before we proceed we have to introduce some notation and a definition.
The free generators of $\bZ_+^N$ will be denoted by $\bo{1}, \bo{2}, \dots, \bo{N}$.
We denote \emph{the support of $\un{n}$} by
\[
\supp \un{n} := \{i \in \{1, \dots, N\} \mid n_i \neq 0\}.
\]
For any $F \subseteq \{1, \dots, N\}$ we write $\un{1}_F := \sum_{i \in F} \Bi$.

%%%%%%%%%%%%%%%%%%%%%%%%%%%%%%%%
\begin{definition}
Let $(\pi,t)$ be a Nica-covariant representation of a product system $X$ over $(\bZ^N, \bZ_+^N)$ with coefficients in $A$.
A core $B_{[\un{m},\un{m}+\un{m}']}$ of $\ca(\pi,t)$ is called a \emph{$d$-dimensional box} if $|\supp{\un{m}'}| = d$. 
\end{definition}

Let $\un{m}' \in \bZ_+^N$ with $|\supp{\un{m}'}| = d + 1$. 
By using a fixed $i\in \supp{\un{m}'}$ we may write
\[
B_{[\un{m},\un{m}+\un{m}']} 
= 
\sum_{k=0}^{m_i'} B_{[\un{m} + k \bo{i}, \un{m} + k \bo{i} + (\un{m}' - m_i' \bo{i})]}
\]
as a sum of $m_i'+1$ boxes of dimension $d$. 
Moreover, for each $0 \leq n \leq m_i' - 1$, the box
\[
B_{[\un{m} + (n+1) \bo{i}, \un{m} + \un{m}']} 
= 
\sum_{k=n +1}^{m_i'} B_{[\un{m} + k \bo{i}, \un{m} +k \bo{i} + (\un{m}' - m_i' \bo{i})]}
\]
is an ideal inside $B_{[\un{m} + n\bo{i}, \un{m} + \un{m}']}$.
Let $e_{\la} := \psi_{\un{m}}(k_{\un{m}, \la})$ for an approximate identity $(k_{\un{m}, \la})$ in $\K X_{\un{m}}$. 
Then $(e_\la)_\la$ is an approximate identity for $B_{[\un{m},\un{m}]}, B_{[\un{m},\un{m} + \un{m}']}$ and $B_{[\un{m},\infty]}$ when $\un{m} > \un{0}$.

Let us first deal with exactness and thus recapture \cite[Corollary 4.18]{Fle17}.
Our method is different from the semi-direct decomposition of Deaconu \cite{Dea07} and Fletcher \cite{Fle17}.

%%%%%%%%%%%%%%%%%%%%%%%%%%%%%%%%
\begin{theorem} \label{T:exact}
Let $X$ be a compactly aligned product system over $(\bZ^N, \bZ_+^N)$ with coefficients in $A$. 
Let $\N\P(X)$ be a quotient of $\N\T(X)$ such that $A \hookrightarrow \N\P(X)$ isometrically.
Then $\N\P(X)$ is exact if and only if $A$ is exact.
\end{theorem}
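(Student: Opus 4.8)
The statement is a biconditional, and I would treat the two implications asymmetrically. One direction is free: the hypothesis supplies an isometric embedding $A \hookrightarrow \N\P(X)$, and since exactness passes to C*-subalgebras, exactness of $\N\P(X)$ forces exactness of $A$. For the converse I would avoid arguing with $\N\P(X)$ directly. Instead I would use that $\N\P(X)$ is a quotient of $\N\T(X)$ and that exactness passes to quotients; hence it suffices to prove the single implication
\[
A \text{ exact} \ \Longrightarrow\ \N\T(X) \text{ exact},
\]
which then covers every quotient $\N\P(X)$ at once and sidesteps any question of whether the particular quotient carries the gauge action. The point of routing through $\N\T(X)$ is that it certainly carries the gauge action $\beta \colon \bT^N \to \Aut(\N\T(X))$, with fixed point algebra $B_{[\un 0, \infty]}$ and a faithful conditional expectation obtained by averaging over $\bT^N$. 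This realizes $\N\T(X)$ as the cross-sectional C*-algebra of a Fell bundle over $\bZ^N$ with unit fibre $B_{[\un 0, \infty]}$; since $\bZ^N$ is amenable, the standard permanence of exactness reduces the claim to showing that $B_{[\un 0, \infty]}$ is exact whenever $A$ is.

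This is where I would split the fixed point algebra. Writing $B_{[\un 0, \infty]} = \ol{\bigcup_{\un m \in \bZ_+^N} B_{[\un 0, \un m]}}$ as an increasing union of boxes and using that exactness is preserved under inductive limits, it is enough to show that each box $B_{[\un m, \un m + \un m']}$ is exact. I would induct on the box dimension $d = |\supp \un m'|$. For $d = 0$ the box equals $\psi_{\un m}(\K X_{\un m})$, a quotient of $\K X_{\un m}$, which in turn is Morita equivalent to the closed ideal $\ol{\langle X_{\un m}, X_{\un m}\rangle} \lhd A$; exactness of $A$ passes to this ideal, then to $\K X_{\un m}$ by Morita invariance, and finally to the box as a quotient. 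For the passage from $d$ to $d+1$, fix $i \in \supp \un m'$ and use the nested ideals
\[
B_{[\un m + (n+1)\bo{i},\ \un m + \un m']} \lh B_{[\un m + n\bo{i},\ \un m + \un m']}
\]
recorded just before the theorem, whose quotients are the $d$-dimensional boxes $B_{[\un m + n\bo{i},\ \un m + n\bo{i} + (\un m' - m_i'\bo{i})]}$. Feeding these extensions of exact algebras by exact algebras into the extension-closure of exactness, and running $n$ downward from $m_i'$ to $0$, yields exactness of the full $(d+1)$-dimensional box. This completes the implication $A \text{ exact} \Rightarrow B_{[\un 0, \infty]} \text{ exact}$, and hence the theorem.

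The routine engine here is the collection of permanence properties of exactness (subalgebras, quotients, ideals, Morita equivalence, extensions, inductive limits), all of which are standard. The one step that genuinely carries weight, and which I expect to be the main obstacle, is the reduction from the whole algebra to the fixed point algebra: one must correctly invoke the gauge action together with the amenability of $\bZ^N$ (through Fell-bundle permanence, or equivalently through Takai duality, $\N\T(X) \otimes \K \cong (\N\T(X) \rtimes_\beta \bT^N) \rtimes_{\hat\beta} \bZ^N$) and verify that the averaging expectation onto $B_{[\un 0, \infty]}$ is faithful so that this machinery applies. Once the problem is localized to the fixed point algebra, the box induction is bookkeeping driven by the ideal structure supplied in the text.
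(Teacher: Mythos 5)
Your proposal reproduces the paper's overall architecture almost exactly: the subalgebra direction, the reduction to $\N\T(X)$ via quotient-permanence, the reduction to the fixed point algebra $B_{[\un{0},\infty]}$ through the gauge action (the paper simply cites \cite[Proposition A.13]{Kat04} where you invoke Fell-bundle permanence over the amenable group $\bZ^N$; both are fine), the inductive limit over boxes, the induction on $d = |\supp \un{m}'|$, and a base case which the paper gets from \cite[Proposition B.7]{Kat04} and you get from a Morita argument. The genuine gap is in your inductive step. You list ``extensions'' among the standard permanence properties of exactness and let the passage from $d$ to $d+1$ rest entirely on it, but exactness of C*-algebras is \emph{not} closed under extensions: by a well-known counterexample of Kirchberg (see the discussion of exactness and liftings in \cite{BO08}), an extension of an exact quotient by an exact ideal can fail to be exact; closure holds only under extra hypotheses, e.g.\ when the sequence is split (or locally split). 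So the inference ``feeding these extensions of exact algebras by exact algebras into the extension-closure of exactness'' is invalid as stated --- and it occurs at exactly the step you dismiss as bookkeeping, while the step you flag as the main obstacle (the fixed-point reduction) is a one-line citation in the paper.

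What the paper does to make the induction legitimate is to prove, \emph{before} running it, that each box is a linear direct sum of the single cores $B_{[\un{n},\un{n}]}$ for $\un{m} \leq \un{n} \leq \un{m}+\un{m}'$: if $\sum\{\psi_{\un{n}}(k_{\un{n}})\} = 0$, then compressing in the Fock representation by the projection $p_{\un{\ell}}$ onto the fiber $X_{\un{\ell}}$, for $\un{\ell}$ minimal with $k_{\un{\ell}} \neq 0$, yields $k_{\un{\ell}} = p_{\un{\ell}} \left( \sum \psi_{\un{n}}(k_{\un{n}}) \right) p_{\un{\ell}} = 0$, a contradiction. Consequently $A_n = B_n + A_{n+1}$ with $B_n \cap A_{n+1} = (0)$, so the quotient map restricted to $B_n$ is a $*$-isomorphism $B_n \simeq A_n/A_{n+1}$ and its inverse splits the sequence $0 \to A_{n+1} \to A_n \to B_n \to 0$ by a $*$-homomorphism; split extensions of exact algebras \emph{are} exact, and that is the form in which the induction closes. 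Note that your identification of the quotients $A_n/A_{n+1}$ with the $d$-dimensional boxes silently uses the same linear independence (without it one only knows $A_n/A_{n+1}$ is a quotient of $B_n$ --- still exact, but that does not cure the extension problem). To complete your proof you must therefore add the Fock-space linear-independence lemma and replace ``extension-closure'' by ``split-extension-closure''.
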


\begin{proof}
If $\N\P(X)$ is exact then $A$ is exact since exactness passes to subalgebras.
As $\N\P(X)$ is a quotient of $\N\T(X)$, and as exactness passes to quotients, it remains to show that exactness of $A$ implies exactness of $\N\T(X)$; equivalently that $\N\T(X)^\be$ is exact by \cite[Proposition A.13]{Kat04}.

Let $(\pi,t)$ be the Fock representation with cores $B_{[\un{m}, \un{m}+\un{m}']}$.
We first show that each $B_{[\un{m}, \un{m}+\un{m}']}$ is the linear direct sum of the C*-algebras $B_{[\un{n}, \un{n}]}$ for $\un{m} \leq \un{n} \leq \un{m}+\un{m}'$. 
Indeed suppose there are $k_\un{n} \in \K X_{\un{n}}$ such that
\[
\sum \{ \psi_{\un{n}}(k_{\un{n}}) \mid \un{m} \leq \un{n} \leq \un{m}+\un{m}' \} = 0,
\]
and we will show that every $k_{\un{n}}$ is zero.
To reach a contradiction let $\un{\ell} \in [\un{m}, \un{m}+\un{m}']$ be minimal such that $k_{\un{\ell}} \neq 0$.
Let $p_{\un{\ell}} \colon \F X \to X_{\un{\ell}}$ be the projection onto $X_{\un{\ell}}$.
Minimality of $\un{\ell}$ yields $p_{\un{\ell}} \psi_{\un{n}}(k_{\un{n}}) p_{\un{\ell}} = 0$ for all $\un{n} \neq \un{\ell}$, and thus we get the contradiction
\[
k_{\un{\ell}} 
= 
p_{\un{\ell}} \psi_{\un{\ell}}(k_{\un{\ell}}) p_{\un{\ell}}
=
p_{\un{\ell}} \left( \sum \{ \psi_{\un{n}}(k_{\un{n}}) \mid \un{m} \leq \un{n} \leq \un{m}+\un{m}' \} \right) p_{\un{\ell}}
=
0.
\]

Now, the fixed point algebra $\N\T(X)^\be$ is the inductive limit of the cores $B_{[\un{m}, \un{m}+\un{m}']}$.
Thus in order to finish the proof it suffices to show that each such core is exact.
By \cite[Proposition B.7]{Kat04}, each $\K X_{\un{n}}$ is exact and thus so is every $B_{[\un{n}, \un{n}]} \simeq \psi_{\un{n}}(\K X_{\un{n}})$.
For the inductive step, suppose that all $d$-dimensional boxes are exact.
Let ${B}_{[\un{m},\un{m}+\un{m}']}$ be now a $(d+1)$-dimensional box.
Let $i \in \supp \un{m}'$; then by the inductive hypotheses every $d$-dimensional box
\[
B_n:= B_{[\un{m} + n \bo{i}, \un{m} + n \bo{i} + (\un{m}' - m_i' \bo{i})]} 
\qfor 0 \leq n \leq m_i',
\]
is exact. 
Also let the algebras $A_{m_i'} := B_{m_i'}$ and
\[
A_n := \sum_{k=n}^{m_i'} B_k,
\qfor
0 \leq n \leq m_i' - 1,
\]
so that $A_{n+1}$ is an ideal inside $A_n$. 
Since $A_n = B_n + A_{n+1}$ is a linear direct sum, we see that $A_n / A_{n+1} \simeq B_n$, and we obtain the following \emph{split} exact sequence
\begin{align*}
\xymatrix{
0 \ar[r] & A_{n+1} \ar[r] & A_n \ar[r] & B_n \ar[r] & 0.
}
\end{align*}
As exactness passes to extensions in split exact sequences, we may apply for $n = m_i' - 1$ and obtain that $A_{m_i' -1}$ is exact.
Now we may induct on $n$ and get that every $A_n$ is exact; in particular so is $A_0 = B_{[\un{m}, \un{m}+\un{m}']}$.
\end{proof}

%%%%%%%%%%%%%%%%%%%%%%%%%%%%%%%%
\begin{remark}
Similar arguments to Theorem \ref{T:exact} suffice to show that $A$ is nuclear if and only if $\N\T(X)$ is nuclear.
Consequently, if $A$ is nuclear then every quotient $\N\P(X)$ of $\N\T(X)$ is nuclear.
However the converse does not hold even for $(\mathbb{Z},\mathbb{Z}_+)$.
For a counterexample where $\O_X$ is nuclear but $A$ is not see \cite[Example 7.7]{Kat04}.
\end{remark}

Our next objective is to show that nuclearity of $\N\O(X)$ is equivalent to the embedding $A \hookrightarrow \N\O(X)$ being nuclear.
We can show this for any $\N\P(X)$ that is a $\bT^N$-equivariant quotient of $\N\T(X)$ and the emdedding $A \hookrightarrow \N\P(X)$ is isometric.
Notice that $\N\P(X)$ inherits the gauge action of $\N\T(X)$.

%%%%%%%%%%%%%%%%%%%%%%%%%%%%%%%%
\begin{theorem}\label{T:nuc NO}
Let $X$ be a compactly aligned product system over $(\bZ^N, \bZ_+^N)$ with coefficients in $A$. 
Let $\N\P(X)$ be a $\bT^N$-equivariant quotient of $\N\T(X)$ such that $A \hookrightarrow \N\P(X)$ isometrically. 
Set $I := A \cap B_{(\un{0}, \infty]}$.
Then the following are equivalent:
\begin{enumerate}
\item
$A / I$ is nuclear and the embedding $I \hookrightarrow {B}_{(\un{0},\infty]}$ is nuclear;
\item
The embedding $A \hookrightarrow \N\P(X)^{\beta}$ is nuclear;
\item
The embedding $A \hookrightarrow \N\P(X)$ is nuclear;
\item
$\N\P(X)^{\beta}$ is nuclear;
\item
$\N\P(X)$ is nuclear.
\end{enumerate}
\end{theorem}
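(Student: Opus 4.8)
The plan is to run a single cycle of implications, using the short exact sequence attached to the fixed point algebra as the organising device. Since $A \subseteq \N\P(X)^\beta$ and $\N\P(X)^\beta = B_{[\un{0},\infty]} = \pi(A) + B_{(\un{0},\infty]}$ with $B_{(\un{0},\infty]}$ an ideal, the second isomorphism theorem gives $A/I \cong \N\P(X)^\beta / B_{(\un{0},\infty]}$ and a vertical map of extensions from $0 \to I \to A \to A/I \to 0$ to $0 \to B_{(\un{0},\infty]} \to \N\P(X)^\beta \to A/I \to 0$ induced by the inclusion $\pi \colon A \hookrightarrow \N\P(X)^\beta$, whose restriction to $I$ is the embedding $I \hookrightarrow B_{(\un{0},\infty]}$ and whose induced quotient map is $\id_{A/I}$. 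This is exactly the configuration of Propositions \ref{P:nuc-quot} and \ref{P:nuc-ext}, and it is what ties condition (i) to nuclearity of the embedding $\pi$.

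First I would dispose of the two \emph{reduction to the core} equivalences. For (iv)$\Leftrightarrow$(v) I would invoke the usual gauge-action argument: $\bT^N$ is amenable, so $\N\P(X) \rtimes_\beta \bT^N$ is nuclear if and only if $\N\P(X)$ is, while the averaging projection exhibits $\N\P(X)^\beta$ as a \emph{full} corner of the crossed product (full because $\N\P(X)$ is generated by its spectral subspaces $\pi(A)$ and $t_p(X_p)$); as nuclearity is Morita invariant and passes to and from full corners, (iv) and (v) are equivalent. For (ii)$\Leftrightarrow$(iii) I would use the faithful conditional expectation $E \colon \N\P(X) \to \N\P(X)^\beta$, which restricts to $\id_A$: composing the nuclear inclusion $A \hookrightarrow \N\P(X)$ with the completely positive map $E$ returns $A \hookrightarrow \N\P(X)^\beta$, while composing $A \hookrightarrow \N\P(X)^\beta$ with the $*$-homomorphism $\N\P(X)^\beta \hookrightarrow \N\P(X)$ goes the other way. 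Since nuclear maps absorb completely positive maps on either side, the two embeddings are simultaneously nuclear.

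Next come the implications through the diagram. The step (v)$\Rightarrow$(iii) is immediate: factoring $\id_{\N\P(X)}$ approximately through matrix algebras and precomposing with the inclusion shows that any embedding into a nuclear algebra is nuclear. The step (ii)$\Rightarrow$(i) is precisely Proposition \ref{P:nuc-quot} applied to $\varphi = \pi$: injectivity of $\pi$ together with nuclearity of the embedding forces both $\varphi_0 = (I \hookrightarrow B_{(\un{0},\infty]})$ and $\wt\varphi = \id_{A/I}$ to be nuclear, which is exactly the two clauses of (i). It remains to establish (i)$\Rightarrow$(iv), which carries the real content and closes the cycle $(\mathrm{i})\Rightarrow(\mathrm{iv})\Leftrightarrow(\mathrm{v})\Rightarrow(\mathrm{iii})\Leftrightarrow(\mathrm{ii})\Rightarrow(\mathrm{i})$.

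For (i)$\Rightarrow$(iv) I would first upgrade (i) to nuclearity of the embedding $\pi$ via Proposition \ref{P:nuc-ext}, and \emph{this is where the main obstacle lies}: the proposition demands an approximate identity $(e_\la)_\la$ of $B_{(\un{0},\infty]}$ with $\pi(a) e_\la \in \pi(I)$ for every $a \in A$. Constructing it is the crux, and it is exactly where the $\bZ_+^N$-geometry enters: one assembles $(e_\la)$ from the corner approximate identities $\psi_{\un{m}}(k_{\un{m},\la})$ — which, as recorded before Theorem \ref{T:exact}, are approximate identities for $B_{[\un{m},\infty]}$ when $\un{m} > \un{0}$ — by an inclusion--exclusion over the generators $\Bo{1},\dots,\Bo{N}$, using the common-minimal property so that left multiplication by $\pi(A)$ collapses the relevant spectral degrees back into $\pi(I)$. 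Granting nuclearity of $\pi$, hence also the clauses of (i), I would then deduce (iv) as follows. The extension $0 \to B_{(\un{0},\infty]} \to \N\P(X)^\beta \to A/I \to 0$, together with the stability of nuclearity under extensions and the nuclearity of $A/I$ from (i), reduces the claim to nuclearity of $B_{(\un{0},\infty]}$. Writing $B_{(\un{0},\infty]} = \sum_{i=1}^N B_{[\Bi,\infty]}$ as a closed sum of ideals and applying the Mayer--Vietoris sequence $0 \to J_1 \cap J_2 \to J_1 \oplus J_2 \to J_1 + J_2 \to 0$ repeatedly reduces matters to the single-direction cores, on which I would run an induction on box dimension parallel to Theorem \ref{T:exact}, now transporting nuclearity instead of exactness through the split exact sequences. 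The delicate point throughout is that the building blocks $\psi_{\un{n}}(\K X_{\un{n}}) \simeq \K X_{\un{n}}$ are \emph{not} nuclear when $A$ is not, so the induction cannot carry plain nuclearity of the boxes; it must instead propagate the relative nuclearity of the embeddings furnished by (i), and it is precisely the approximate-identity device feeding Proposition \ref{P:nuc-ext} that makes this propagation go through.
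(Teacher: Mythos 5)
Your cycle structure and the easy equivalences --- (iv)$\Leftrightarrow$(v) via the crossed-product/full-corner argument, (ii)$\Leftrightarrow$(iii) via the conditional expectation, (v)$\Rightarrow$(iii), and (ii)$\Rightarrow$(i) via Proposition \ref{P:nuc-quot} --- are all fine and agree in substance with the paper. But the step you yourself flag as the crux is genuinely broken, and in a way that no construction can repair. You propose to deduce nuclearity of $\pi \colon A \hookrightarrow B_{[\un{0},\infty]}$ from (i) by applying Proposition \ref{P:nuc-ext} to the pair $I \lhd A$, $B_{(\un{0},\infty]} \lhd B_{[\un{0},\infty]}$, which requires an approximate identity $(e_\la)_\la$ of $B_{(\un{0},\infty]}$ with $\pi(a) e_\la \in \pi(I)$ for all $a \in A$. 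Such a net need not exist: take $\N\P(X) = \N\T(X)$ itself, a legitimate instance of the theorem. In the Fock representation every element of $B_{(\un{0},\infty]}$ vanishes on the vacuum summand $X_{\un{0}} = A$ while $\pi$ acts faithfully there, so $I = A \cap B_{(\un{0},\infty]} = (0)$; the requirement then reads $\pi(a) e_\la = 0$ for all $a, \la$, and in the limit this forces $\pi(a) b = \lim_\la \pi(a) e_\la b = 0$ for every $b \in B_{(\un{0},\infty]}$, which is false whenever some $\phi_{\Bi} \neq 0$ since $\pi(a)\psi_{\Bi}(k) = \psi_{\Bi}(\phi_{\Bi}(a)k)$. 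So no inclusion--exclusion over the generators can produce the required net. The point you missed is that this implication needs no such device: condition (i) asserts nuclearity of the \emph{algebra} $A/I$, not merely of a map out of it, so $(A/I)\ominus D = (0)$ for every $D$; by \cite[Lemma A.5]{Kat04} every element of $A \ominus D$ then lies in the image of $I \otimes_{\max} D$, and the second clause of (i) (nuclearity of $I \hookrightarrow B_{(\un{0},\infty]}$) kills it under $\pi \otimes_{\max} \id_D$. This is exactly \cite[Proposition A.6]{Kat04}, which the paper invokes for (i)$\Leftrightarrow$(ii); Proposition \ref{P:nuc-ext} is reserved for a different place.

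That different place is the implication you treat most casually, and your sketch of it does not amount to a proof. Reducing (iv) to nuclearity of $B_{(\un{0},\infty]}$ and then invoking Mayer--Vietoris plus ``an induction parallel to Theorem \ref{T:exact}'' cannot work as stated, for the reason you concede: the blocks $\psi_{\un{n}}(\K X_{\un{n}})$ are not nuclear, so no decomposition into (split) extensions transports plain nuclearity. What the paper actually propagates is nuclearity of the \emph{embeddings} $B_{[\un{m},\un{m}+\un{m}']} \hookrightarrow B_{[\un{m},\infty]}$, by a double induction (on the box dimension $d$, and inside it downward on $n$) over the nested ideals $A_n = \sum_{k\geq n} B_k$ and $A_n^\infty = B_{[\un{m}+n\Bi,\infty]}$, alternating Proposition \ref{P:nuc-quot} and Proposition \ref{P:nuc-ext}; there the approximate identity hypothesis \emph{does} hold, because Nica covariance gives $B_{[\un{m}+(n+1)\Bi,\,\un{m}+(n+1)\Bi]} \cdot A_n \subseteq A_{n+1}$, with the approximate identity taken in the single core $B_{[\un{m}+(n+1)\Bi,\,\un{m}+(n+1)\Bi]}$. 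Moreover the induction needs a base case your proposal omits entirely: one must first convert nuclearity of $A \hookrightarrow B_{[\un{0},\infty]}$ into nuclearity of $\psi_{\un{m}}(\K X_{\un{m}}) \hookrightarrow B_{[\un{m},\infty]}$, which the paper does via \cite[Proposition B.8]{Kat04} applied to $X_{\un{m}}$ and $Y = \ol{t(X_{\un{m}}) \cdot B_{[\un{0},\infty]}}$; without this transfer to the compacts the induction has nothing to start from. Nuclearity of $B_{[\un{0},\infty]}$ is then obtained by exhausting it with boxes whose inclusions into it are nuclear, rather than through your extension $0 \to B_{(\un{0},\infty]} \to B_{[\un{0},\infty]} \to A/I \to 0$.
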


\begin{proof}
For the equivalence of items (i) and (ii), consider the following commutative diagram of short exact sequences
\begin{align*}
\xymatrix{
0 \ar[r] & I \ar[r] \ar[d] & A \ar[r] \ar[d] & A / I \ar[r] \ar[d] & 0\\
0 \ar[r] & {B}_{(\un{0},\infty]} \ar[r] & {B}_{[\un{0},\infty]} \ar[r] & {B}_{[\un{0},\infty]} / {B}_{(\un{0},\infty]} \ar[r] & 0.
}
\end{align*}
However by definition we have that
\[
{B}_{[0,\infty]} / {B}_{(\un{0},\infty]} \simeq A / (A \cap {B}_{(\un{0},\infty]}) = A/ I.
\]
Therefore \cite[Proposition A.6]{Kat04} implies that the embedding $A \hookrightarrow {B}_{[\un{0},\infty]}$ is nuclear if and only if $A/ I$ and the embedding $I \hookrightarrow {B}_{(\un{0},\infty]}$ are both nuclear.

Items (iv) and (v) are equivalent by \cite[Proposition A.13]{Kat04}.
Trivially item (v) implies item (iii).
Moreover since $A \subseteq \N\P(X)^\be = B_{[\un{0}, \infty]}$ we can compose with the conditional expectation to deduce that item (iii) implies item (ii).
Hence it remains to show that (ii) implies (iv).

Note that nuclearity of $A \hookrightarrow {B}_{[\un{0},\infty]}$ implies exactness for $A$, which in turn implies exactness of $\N\P(X)$ by Theorem \ref{T:exact}. 
Hence, all C*-algebras involved will be exact.
The plan is to show that nuclearity of $A\hookrightarrow {B}_{[\un{0},\infty]}$ implies nuclearity of the embedding ${B}_{[\un{m},\un{m}+\un{m}']} \hookrightarrow {B}_{[\un{m},\infty]}$ for any box ${B}_{[\un{m},\un{m}+\un{m}']}$, by inducting on the dimension $d=|\supp \un{m}'|$. 
With that in hand, we get that ${B}_{[\un{0},m \cdot\un{1}_F]}$ embeds nuclearly in ${B}_{[\un{0},\infty]}$ for every $m \in \bZ_+$ and $F \subseteq \{1, \dots, N\}$.
Therefore by taking the direct limit we can conclude that ${B}_{[\un{0},\infty]}$ is nuclear.

To this end suppose that $\N\P(X) = \ca(\pi,t)$.
For the base step of the induction, notice that the inclusion
\[
{B}_{[\un{m},\un{m}]} = \psi_{\un{m}}(X_{\un{m}}) \hookrightarrow {B}_{[\un{m},\infty]}
\]
is nuclear by applying \cite[Proposition B.8]{Kat04} for $X= X_{\un{m}}$ and $Y= \ol{t(X_{\un{m}}) \cdot B_{[\un{0},\infty]}}$. 
For the inductive step, suppose that all $d$-dimensional boxes $B_{[\un{m}, \un{m}+\un{m}']}$ embed into their corresponding ${B}_{[\un{m},\infty]}$ nuclearly.
Let ${B}_{[\un{m},\un{m}+\un{m}']}$ be now a $(d+1)$-dimensional box.
Fix an $i \in \supp \un{m}'$ and write
\[
{B}_{[\un{m},\un{m}+\un{m}']}
=
\sum_{k=0}^{m_i'} B_k
\qfor
B_k := {B}_{[\un{m} + k \Bi,\un{m} + k\Bi + (\un{m}'- m_i' \Bi)]},
\]
where every $B_k$ is a $d$-dimensional box.
We will reconstruct ${B}_{[\un{m},\un{m}+\un{m}']}$ inductively, and so we define
\[
A_n := \sum_{k=n}^{m_i'} B_k
\qand A_n^\infty:= {B}_{[\un{m} + n\Bi, \infty]}
\qfor 
0 \leq n \leq m_i'.
\]
It is clear that $A_{m_i'} = B_{m_i'}$ and $A_n = B_n + A_{n+1}$ for $0 \leq n \leq m_i' -1$.
By the inductive hypothesis on $d$-dimensional boxes the inclusions $B_n \hookrightarrow A_n^\infty$ are nuclear for all $0 \leq n \leq m_i'$.
We wish to induct on $n$ and show that the inclusions $A_n \hookrightarrow A_n^\infty$ are nuclear for all $0 \leq n\leq m_i'$, thus concluding nuclearity for $n =0$, i.e., for
\[
B_{[\un{m}, \un{m} + \un{m}']} = A_0 \hookrightarrow A_0^\infty = B_{[\un{m}, \infty]}.
\]

For the base step $n = m_i'$, by the inductive hypothesis on $d$-dimensional boxes, we have that the embedding $B_{m_i' - 1} \hookrightarrow A_{m_i' - 1}^\infty$ is nuclear.
Therefore by applying Proposition \ref{P:nuc-quot} to the following commutative diagram
\begin{equation*}
\xymatrix{
0 \ar[r] & B_{m_i'-1} \cap B_{m_i'} \ar[r] \ar[d] & B_{m'_i - 1} \ar[r] \ar[d] & B_{m_i' - 1}/ B_{m_i'-1} \cap B_{m_i'} \ar[r] \ar[d] & 0\\
0 \ar[r] & A_{m_i'}^{\infty} \ar[r] & A_{m_i' - 1}^{\infty} \ar[r] & A_{m_i'-1}^{\infty} / A_{m_i'}^{\infty} \ar[r] & 0
}
\end{equation*}
we derive that the map
\[
 A_{m_i' - 1}/ A_{m_i'} \simeq B_{m_i' - 1}/ B_{m_i'-1} \cap B_{m_i'} \to 
A_{m_i' - 1}^\infty/ A_{m_i'}^\infty
\]
is nuclear.
Furthermore, again by the inductive hypothesis for $d$-dimensional boxes, the inclusion
\[
A_{m_i'} = B_{m_i'} \hookrightarrow B_{[\un{m} + m_i' \Bi, \infty]} = A_{m_i'}^\infty
\]
is nuclear.
Now the approximate identity of ${B}_{[\un{m} + m_i' \Bi, \un{m} + m_i' \Bi]}$ is an approximate identity for both $A_{m_i'}$ and $A_{m_i'}^\infty$, and Nica-covariance yields
\[
{B}_{[\un{m} + m_i' \Bi, \un{m} + m_i' \Bi]} \cdot A_{m_i'-1} \subseteq A_{m_i'}.
\]
We may thus apply Proposition \ref{P:nuc-ext} to the diagram
\begin{equation*}
\xymatrix{
0 \ar[r] & A_{m_i'} \ar[r] \ar[d] & A_{m_i' - 1} \ar[r] \ar[d] & A_{m_i' - 1} / A_{m_i'} \ar[r] \ar[d] & 0\\
0 \ar[r] & A_{m_i'}^{\infty} \ar[r] & A_{m_i' - 1}^{\infty} \ar[r] & A_{m_i' - 1}^{\infty} / A_{m_i'}^{\infty} \ar[r] & 0
}
\end{equation*}
and get that the inclusion $A_{m_i'-1} \hookrightarrow A_{m_i'-1}^\infty$ is nuclear.
This proves the base step $n = m_i'$.

Now suppose that the embedding $A_{n+1} \hookrightarrow A_{n+1}^\infty$ is nuclear, for $n+1 \leq m_i'$.
We will show that the embedding $A_{n} \hookrightarrow A_{n}^\infty$ is also nuclear.
By the standing hypothesis we have that the middle arrow in the diagram
\begin{align*}
\xymatrix{
0 \ar[r] & A_{n+1} \cap B_n \ar[r] \ar[d] & B_n \ar[r] \ar[d] & B_n / A_{n+1} \cap B_n \ar[r] \ar[d] & 0\\
0 \ar[r] & A_{n+1}^{\infty} \ar[r] & A_n^{\infty} \ar[r] & A_n^{\infty} / A_{n+1}^{\infty} \ar[r] & 0.
}
\end{align*}
is nuclear.
As before, by Proposition \ref{P:nuc-quot} we get that the map
\[
A_{n}/A_{n+1} \simeq B_n / (A_{n+1} \cap B_n) \to A_n^\infty / A_{n+1}^\infty
\]
is nuclear.
Moreover the approximate identity of ${B}_{[\un{m} + (n+1) \Bi, \un{m} + (n+1) \Bi]}$ is an approximate identity for both $A_{n + 1}$ and $A_{n + 1}^\infty$, and Nica-covariance yields
\[
{B}_{[\un{m} + (n+1) \Bi, \un{m} + (n+1) \Bi]} \cdot A_{n} \subseteq A_{n+1}.
\]
The map $A_{n+1} \hookrightarrow A_{n+1}^\infty$ is nuclear by the inductive step, and thus we can apply Proposition \ref{P:nuc-ext} to the diagram
\begin{equation*}
\xymatrix{
0 \ar[r] & A_{n+1} \ar[r] \ar[d] & A_n \ar[r] \ar[d] & A_n / A_{n+1} \ar[r] \ar[d] & 0\\
0 \ar[r] & A_{n+1}^{\infty} \ar[r] & A_n^{\infty} \ar[r] & A_n^{\infty} / A_{n+1}^{\infty} \ar[r] & 0
}
\end{equation*}
and derive that its middle vertical embedding is nuclear.
This completes the proof.
\end{proof}

%%%%%%%%%%%%%%%%%%%%%%%%%%%%%%%%
\begin{remark}
There is a second way to obtain Theorem \ref{T:nuc NO} for $\N\O(X)$ when $X$ is regular, i.e., when every $\phi_{\un{n}}$ is injective and $\phi_{\un{n}}(A) \subseteq \K X_{\un{n}}$.
For in this case there are injective mappings
\[
\phi_{\un{m}} \colon A \to \K X_{\un{m}}
\qand
\K X_{\un{m}} \to \K (X_{\un{m}} \otimes X_{\un{m}'}): k_{\un{m}} \mapsto k_{\un{m}} \otimes \id_{X_{\un{m}'}}.
\]
As in the one-variable case (that can be traced back to Pimsner \cite{Pim97}) it is not hard to show that $\N\O(X)^\be$ is the inductive limit of $\K X_{\un{n}}$ under these maps.
This follows by observing that $\N\O(X)$ is the universal C*-algebra generated by representations that are fiber-wise covariant in the sense of Katsura by \cite[Corollary 5.2]{SY11}, and thus
\[
B_{[\un{m}, \un{m} + \un{m}']} 
= 
\spn\{\psi_{\un{n}}(\K X_{\un{n}}) \colon \un{m} \leq \un{n} \leq \un{m} + \un{m}'\}
=
\psi_{\un{m} + \un{m}'}(\K X_{\un{m} + \un{m}'}).
\]
Therefore when $A \hookrightarrow \N\O(X)^\be$ is nuclear then the inclusions $\psi_{\un{n}}(\K X_{\un{n}}) \subseteq \N\O(X)$ are nuclear, and this yields nuclearity of $\N\O(X)^\be$. 
Fletcher \cite[Corollary 5.21]{Fle17} obtains the same result as an application of the elegant semi-direct decomposition of $X$.
\end{remark}

%%%%%%%%%%%%%%%%%%%%%%%%%%%%%%%%
\begin{remark}
The ideal $A \cap B_{(\un{0}, \infty]}$ plays a fundamental role in Theorem \ref{T:nuc NO}, and there is a wide class of product systems for which we have a description.
In \cite{DK18b} we introduced the term \emph{strong compactly aligned} for product systems over $(\bZ^N, \bZ_+^N)$ if $\K X_{\un{n}} \otimes \id_{X_{\Bi}} \subseteq \K( X_{\un{n}} \otimes X_{\Bi})$ whenever $i \notin \supp \un{n}$. 
The method of solving polynomial equations of \cite[Section 4]{DK18b} yields that
\begin{equation}
A \cap B_{(\un{0}, \infty]} \subseteq (\bigcap_{i=1}^N \ker \phi_{\Bi})^\perp \cap (\bigcap_{i=1}^{N} \phi_{\Bi}^{-1}(\K X_{\Bi})),
\end{equation}
for any $\bT^N$-equivariant quotient $\N\P(X) = \ca(\pi,t)$ of $\N\T(X)$ that admits a faithful copy of $A$.
In particular in \cite{DK18b} it is shown that equality holds for $\N\O(X)$.
\end{remark}

%%%%%%%%%%%%%%%%%%%%%%%%%%%%%%%%
\begin{remark}
Theorem \ref{T:nuc NO} is new even for the case of $N=1$, i.e., for the relative Cuntz-Pimsner algebras of Muhly-Solel \cite{MS98}.
Let us provide some background.

Let $X$ be a C*-correspondence over $A$ and fix $J \subseteq \phi^{-1}_X(\K X)$.
The \emph{$J$-relative Cuntz-Pimsner algebra $\O(J,X)$} is the universal C*-algebra with respect to the representations $(\pi,t)$ that satisfy
\[
\pi(a) = \psi(\phi_X(a)) \foral a \in J.
\]
By \cite[Lemma 1.2]{KP13} the embedding $A \hookrightarrow \O(J,X)$ is isometric if and only if $J \subseteq J_X$.
Moreover in \cite{Kak16} it is shown that every $\bT$-equivariant quotient of $\T_X$ that admits an isometric copy of $A$ is a relative Cuntz-Pimsner algebra in the following sense.
Let $\T_X = \ca(\pi,t)$ and $q \colon \T_X \to \T_X/I$.
Then $\T_X/I \simeq \O(J,X)$ for $J = \{a \in A \mid  q\pi(a) \in q\psi(\K X) \}$.
So in fact $J = A \cap B_{(0,\infty]}$.

Now we can see that an application of Theorem \ref{T:nuc NO} gives the following equivalent items for $\O(J, X)$ when $J \subseteq J_X$:
\begin{enumerate}
\item
$A / J$ is nuclear and the embedding $J \hookrightarrow {B}_{(\un{0},\infty]}$ is nuclear;
\item
The embedding $A \hookrightarrow \O(J, X)$ is nuclear;
\item
$\O(J, X)$ is nuclear.
\end{enumerate}
\end{remark}

%%%%%%%%%%%%%%%%%%%%%%%%%%%%%%%%
\section{$\bZ_+^N$-controlled product systems}\label{S:ZN-con}
%%%%%%%%%%%%%%%%%%%%%%%%%%%%%%%%

Recall from \cite{CL07} that a \emph{controlled map} $\vartheta \colon (G,P) \to (G',P')$ between quasi-lattice ordered groups is an order preserving homomorphism such that:
\begin{enumerate}
\item[(C1)] the restriction $\vartheta|_P$ is finite-to-one;
\item[(C2)] for all $p,q \in P$ with $p \vee q < \infty$ we have $\vartheta(p) \vee \vartheta(q) = \vartheta(p \vee q)$.
\end{enumerate}
It then follows that:
\begin{enumerate}
\item[(C3)] $\vartheta^{-1}(e') \cap P = \{e\}$;
\item[(C4)] if $p \vee q < \infty$ and $\vartheta(p) \leq \vartheta(q)$ then $p \leq q$;
\item[(C5)] if $p \vee q < \infty$ and $\vartheta(p) = \vartheta(q)$ then $p=q$.
\end{enumerate}

In this section we consider product systems over $(G,P)$ for which there exists a controlled map $\vartheta\colon (G, P) \to (\bZ^N, \bZ_+^N)$.
We say that a Nica-covariant representation $(\pi,t)$ of $X$ over $P$ admits a \emph{$\bT^N$-gauge action via $\vartheta$} if there is a point-norm continuous family homomorphism $\beta \colon \mathbb{T}^N \rightarrow \Aut(\ca(\pi,t))$ such that
\[
\be_{\un{z}}(t_p(\xi_p)) = \un{z}^{\vartheta(p)} \, t_{p}(\xi_{p})
\foral \xi_{p} \in X_{p}
\qand 
\be_{\un{z}}(\pi(a)) = \pi(a)
\foral a \in A.
\]
Universality implements a $\bT^N$-gauge action $\be$ on $\N\T(X)$.
A usual gauge-action argument gives that the Fock representation defines a faithful representation of $\N\T(X)$.
This will become clear once we identify the appropriate boxes here.
To this end let a pair $(\pi,t)$ that admits a $\bT^N$-gauge action via $\vartheta$.
If $p,q \in P$ with $\vartheta(p) = \vartheta(q)$ then by (C5) either $p = q$ or $p \vee q = \infty$; thus Nica-covariance yields
\[
t_{p}(\xi_{p})^* t_{q}(\eta_{q})
=
\de_{p,q} \pi(\sca{\xi_{p}, \eta_{q}}).
\]
For $\un{n} \in \bZ_+^N$ we thus define the induced sum of \emph{orthogonal} C*-correspondences
\[
X_{\un{n}} := \sum\{ t_p(X_p) \mid \vartheta(p) = \un{n} \},
\]
which becomes a C*-correspondence over $A$.
Notice that the sum is finite as $\vartheta|_P$ is finite-to-one.
The compact operators ideal $\K X_{\un{n}}$ is then $*$-isomorphic to the matrix algebra
\[
B_{[\un{n}, \un{n}]} := 
\sum \{ t_p(X_{p}) t_q(X_{q})^* \mid \vartheta(p) = \un{n} = \vartheta(q) \}.
\]
We define
\[
B_{[\un{m}, \un{m} + \un{m}']} := \sum \{ B_{[\un{n}, \un{n}]} \mid \un{n} \in [\un{m}, \un{m} + \un{m}'] \},
\]
and note that the fixed point algebra $\ca(\pi,t)^\be$ is the union of those boxes.

%%%%%%%%%%%%%%%%%%%%%%%%%%%%%%%%
\begin{theorem} \label{T:ex con}
Let $\vartheta \colon (G,P) \to (\bZ^N, \bZ_+^N)$ be a controlled map and let $X$ be a compactly aligned product system over $(G,P)$ with coefficients in $A$. 
Let $\N\P(X)$ be a quotient of $\N\T(X)$ such that $A \hookrightarrow \N\P(X)$ isometrically. 
Then $\N\P(X)$ is exact if and only if $A$ is exact.
\end{theorem}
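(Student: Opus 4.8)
The plan is to follow the architecture of Theorem \ref{T:exact}, replacing the literal $\bZ_+^N$-boxes with the $\vartheta$-graded boxes $B_{[\un{m},\un{m}+\un{m}']}$ introduced above. As before, if $\N\P(X)$ is exact then so is its subalgebra $A$; and since $\N\P(X)$ is a quotient of $\N\T(X)$ and exactness is inherited by quotients, it suffices to prove that exactness of $A$ forces exactness of $\N\T(X)$. Here $(\pi,t)$ carries a $\bT^N$-gauge action via $\vartheta$, so by \cite[Proposition A.13]{Kat04} this reduces to showing that the fixed point algebra $\N\T(X)^\be$ is exact.

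The one genuinely new point is the following linear independence, which replaces the Fock-space computation in Theorem \ref{T:exact}: \emph{each box $B_{[\un{m},\un{m}+\un{m}']}$ is the internal linear direct sum of the $B_{[\un{n},\un{n}]}$ over $\un{n}\in[\un{m},\un{m}+\un{m}']$}. To see this, work in the Fock representation and grade $\F X=\bigoplus_{r\in P}X_r$ by $\vartheta$, writing $\F_{\un{\ell}}:=\bigoplus_{\vartheta(r)=\un{\ell}}X_r$ with orthogonal projection $P_{\un{\ell}}$. Suppose $\sum\{\psi_{\un{n}}(k_{\un{n}})\mid \un{m}\le\un{n}\le\un{m}+\un{m}'\}=0$ and let $\un{\ell}$ be minimal with $k_{\un{\ell}}\neq 0$. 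Each $\psi_{\un{n}}(k_{\un{n}})$ lies in the fixed point algebra, hence commutes with the gauge action and preserves the $\vartheta$-grading; moreover $P_{\un{\ell}}\,\psi_{\un{n}}(k_{\un{n}})\,P_{\un{\ell}}=0$ whenever $\un{n}\not\le\un{\ell}$, since an annihilation factor $t_q(\eta_q)^*$ can be nonzero on $X_r$ with $\vartheta(r)=\un{\ell}$ only if $q\le r$, forcing $\un{n}=\vartheta(q)\le\un{\ell}$. For $\un{n}<\un{\ell}$ minimality of $\un{\ell}$ gives $k_{\un{n}}=0$, while for $\un{n}=\un{\ell}$ the orthogonality from (C5) identifies $\F_{\un{\ell}}\cong X_{\un{\ell}}$ and shows $P_{\un{\ell}}\,\psi_{\un{\ell}}(k_{\un{\ell}})\,P_{\un{\ell}}\cong k_{\un{\ell}}$. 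Compressing the vanishing sum to $\F_{\un{\ell}}$ therefore yields $k_{\un{\ell}}=0$, a contradiction. In particular $A_n=B_n+A_{n+1}$ remains a linear direct sum in the induction below, so $A_n/A_{n+1}\simeq B_n$.

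With this in place the remaining steps carry over. The fixed point algebra $\N\T(X)^\be$ is the inductive limit of the boxes, so it suffices to prove each box exact. For the base case, $X_{\un{n}}$ is a finite orthogonal direct sum of the Hilbert modules $t_p(X_p)$ with $\vartheta(p)=\un{n}$, hence itself a Hilbert $A$-module, so $B_{[\un{n},\un{n}]}\simeq\K X_{\un{n}}$ is exact by \cite[Proposition B.7]{Kat04}. For the inductive step on $d=|\supp\un{m}'|$, fix $i\in\supp\un{m}'$, write the $(d+1)$-box as the sum of the $d$-boxes $B_k=B_{[\un{m}+k\Bi,\,\un{m}+k\Bi+(\un{m}'-m_i'\Bi)]}$, and set $A_n=\sum_{k=n}^{m_i'}B_k$; the box multiplication rules together with Nica-covariance give $A_{n+1}\lhd A_n$, and the direct sum above yields the split exact sequences $0\to A_{n+1}\to A_n\to B_n\to 0$. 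Since exactness passes through split extensions, a downward induction on $n$ gives exactness of $A_0=B_{[\un{m},\un{m}+\un{m}']}$. I expect the main obstacle to be precisely the linear independence step: the orthogonal-sum structure of $X_{\un{n}}$ and the need to grade $\F X$ by $\vartheta$ rather than by a single fibre make the compression argument more delicate, and it is exactly here that the controlled-map axioms (C4)--(C5) become indispensable.
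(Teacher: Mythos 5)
Your proposal is correct and follows essentially the same route as the paper: reduce to $\N\T(X)^\be$ via the gauge action and \cite[Proposition A.13]{Kat04}, prove that each box is the linear direct sum of the level-cores by a Fock-space compression using (C5), get the base case from \cite[Proposition B.7]{Kat04} applied to the orthogonal sum $X_{\un{n}}=\sum\{t_p(X_p)\mid\vartheta(p)=\un{n}\}$, and then run the split-exact-sequence induction of Theorem \ref{T:exact} verbatim. The only cosmetic difference is that you compress with the level projection $P_{\un{\ell}}=\sum_{\vartheta(r)=\un{\ell}}p_r$ to recover the whole block $k_{\un{\ell}}\in\K X_{\un{\ell}}$, whereas the paper compresses with the individual fibre projections $p_{q_1},p_{q_2}$ to isolate the matrix entries $k_{q_1,q_2}\in\K(X_{q_1},X_{q_2})$; both hinge on the same use of (C5).
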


\begin{proof}
As with Theorem \ref{T:exact} we just need to check for $\N\T(X)$.
Consider first the Fock representation $(\pi,t)$.
We may apply \cite[Proposition B.7]{Kat04} for $X_{\un{n}} := \sum\{ t_p(X_p) \mid \vartheta(p) = \un{n} \}$ and deduce that the cores $B_{[\un{n}, \un{n}]}$ of $\N\T(X)$ are exact.
Now we may follow the proof of Theorem \ref{T:exact} verbatim and derive that the fixed point algebra
\[
\ca(\pi,t)^\be = \ol{\spn} \{ t(X_p) t(X_q)^* \mid \vartheta(p) = \vartheta(q) \}
\]
(and thus $\ca(\pi,t)$) is exact.
The only difference is that we have to consider separately the projections $p_q \colon \F X \to X_q$.
To this end let $\psi_{r, r'} \colon \K (X_r, X_{r'}) \to \B(H)$ be the induced map such that $\psi_{r, r'}(\theta_{\xi_{r'}, \xi_{r}}) = t(\xi_{r'}) t(\xi_{r})^*$, and suppose that
\[
f := \sum \{ \psi_{r_1, r_2}(k_{r_1, r_2}) \mid k_{r_1, r_2} \in \K(X_{r_1}, X_{r_2}), \vartheta(r_1) = \vartheta(r_2) \in [\un{m}, \un{m} + \un{m}'] \} = 0.
\]
Let $\un{\ell}$ be minimal such that $k_{q_1, q_2} \neq 0$ with $\vartheta(q_1) = \vartheta(q_2) = \un{\ell}$.
By using (C5) and the Fock space representation we have that
\[
k_{q_1, q_2} = p_{q_2} f p_{q_1} = 0
\]
which gives the required contradiction.
We also get that the $\{\psi_p(\K X_p)\}_{\vartheta(p) \in [\un{m}, \un{m} + \un{m}']}$ are linearly independent and so the Fock representation defines a faithful representation of $\N\T(X)$.
\end{proof}

%%%%%%%%%%%%%%%%%%%%%%%%%%%%%%%%
\begin{remark}
The proof of Theorem \ref{T:ex con} also applies to show that $\N\T(X)$ is nuclear when $A$ is nuclear.
As nuclearity passes to quotients we also get that $\N\O(X)$ is nuclear whenever $A$ is so.
This gives an extension of \cite[Corollary 8.3]{Li13} for $\bZ^N$-controlled quasi-lattices.
Indeed $\ca_r(P)$ of \cite{Li13} is $\N\T(X)$ for the trivial product system with $X_p = \bC$ for all $p \in P$.
\end{remark}

Nevertheless there is a conditional expectation onto a ``smaller fixed point algebra'' of $\N\T(X)$.
The existence of such maps will be the prototype for the quotients we will consider. 

%%%%%%%%%%%%%%%%%%%%%%%%%%%%%%%%
\begin{remark}\label{R:cond exp fock}
Let $(\pi,t)$ be the Fock representation.
It follows by \cite[Proposition 5.4]{KL16} that there is a faithful conditional expectation 
\[
E \colon \N\T(X) \to \ol{\spn}\{\psi_p(\K X_p) \mid p \in P\}.
\]
The main argument in our case is the following.
Fix $\un{n} \in \bZ_+^N$ and for each $p \in \vartheta^{-1}(\un{n})$ let the approximate identity $(k_{p,\la})_\la \subseteq \K X_p$.
By passing to a sub-net let the projections $e_p := \text{w*-}\lim_\la \psi_p(k_{p, \la})$, and define
\[
E_p \colon B_{[\un{n}, \un{n}]} \to \psi_p(\K X_p) : f \mapsto e_p f e_p.
\]
That is $E_p$ isolates the $p$-th diagonal entry of $B_{[\un{n}, \un{n}]}$.
Now we can set
\[
E_{\un{n}} := \sumoplus_{p \in \vartheta^{-1}(\un{n})} E_p
\qand
E := \sumoplus_{\un{n} \in \bZ_+^N} E_{\un{n}}.
\]
The proof of Theorem \ref{T:ex con} shows that $\N\T(X)^\be$ is the direct sum of the $B_{[\un{n}, \un{n}]}$ and so $E$ is well defined on $\N\T(X)^\be$.
By composing with the faithful conditional expectation induced by $\{\be_{\un{z}}\}_{\un{z} \in \bT^N}$ we may extend $E$ to the whole $\N\T(X)$.
It follows that $E$ is the projection to the diagonal part of the cores as required, and thus it is faithful.
Likewise we have that $E \otimes \id_D$ and $E \otimes_{\max} \id_D$ is a faithful conditional expectation for every C*-algebra $D$.
\end{remark}

We will also require that $(G,P)$ satisfies a $\bZ_+^N$-controlled minimality condition.
In Section \ref{S:exa} we will see a range of $(G,P)$ that satisfy these conditions.

%%%%%%%%%%%%%%%%%%%%%%%%%%%%%%%%
\begin{definition}\label{D:minimality}
Let $\vartheta \colon (G,P) \to (\bZ^N, \bZ_+^N)$ be a controlled map.
We say that $\vartheta$ has the \emph{minimality property} if, whenever $p \neq q$ are minimal in $\vartheta^{-1}([\un{m}, \infty])$ for $\un{m} \in \bZ_+^N$, then $p \vee q = \infty$.
\end{definition}

Now we can extend the nuclearity results from $(\bZ^N, \bZ_+^N)$ to such $(G,P)$ and for quotients of $\N\T(X)$.
In Proposition \ref{P:CNP is in} we will show that the Cuntz-Nica-Pimsner algebra is included in Theorem \ref{T:nuc con}.

%%%%%%%%%%%%%%%%%%%%%%%%%%%%%%%%
\begin{theorem}\label{T:nuc con}
Let $\vartheta \colon (G,P) \to (\bZ^N, \bZ_+^N)$ be a controlled map with the minimality property and $X$ be a compactly aligned product system over $(G,P)$.
Let $\N\P(X)$ be a $\bT^N$-equivariant quotient of $\N\T(X)$ such that $A \hookrightarrow \N\P(X)$ isometrically and there is a faithful conditional expectation
\[
E \colon \N\P(X) \to \ol{\spn}\{\psi_p(\K X_p) \mid p \in P\}
\]
such that $E \otimes_{\max} \id_D$ (resp. $E \otimes \id_D$) is a faithful conditional expectation of $\N\P(X) \otimes_{\max} D$ (resp. $\N\P(X) \otimes D$) for every C*-algebra $D$.
Set $I := A \cap B_{(\un{0}, \infty]}$.
Then the following are equivalent:
\begin{enumerate}
\item
$A / I$ is nuclear and the embedding $I \hookrightarrow {B}_{(\un{0},\infty]}$ is nuclear.
\item
The embedding $A \hookrightarrow E(\N\P(X))$ is nuclear;
\item
The embedding $A \hookrightarrow \N\P(X)^\be$ is nuclear;
\item
The embedding $A \hookrightarrow \N\P(X)$ is nuclear;
\item
$E(\N\P(X))$ is nuclear;
\item
$\N\P(X)^\be$ is nuclear;
\item
$\N\P(X)$ is nuclear.
\end{enumerate}
\end{theorem}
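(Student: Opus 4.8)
The plan is to mirror the dimension induction of Theorem \ref{T:nuc NO}, using the conditional expectation $E$ to mediate between $\N\P(X)$ and its diagonal part. I would first record the chain of inclusions $A \subseteq E(\N\P(X)) \subseteq \N\P(X)^\be \subseteq \N\P(X)$, the first holding because $A = \pi(A) = \psi_e(\K X_e)$ and the rest by definition. Since a nuclear map stays nuclear after composition with a completely positive contraction on either side, composing an embedding of $A$ with the relevant inclusion (to enlarge the target) or with $E$, resp. the gauge expectation $E_\be$ (to shrink the target, each of which restricts to the identity on $A$), shows at once that items (ii), (iii), (iv) are equivalent. For the algebra-nuclearity items I would invoke the fact that a faithful conditional expectation whose maximal and minimal amplifications remain faithful detects nuclearity of the ambient algebra, which is exactly the argument of \cite[Proposition A.13]{Kat04}: applied to the hypothesised $E$ it gives (v) $\iff$ (vii), and applied to $E_\be\colon\N\P(X)\to\N\P(X)^\be$ (available from the $\bT^N$-action) it gives (vi) $\iff$ (vii). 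Finally (vii) $\Rightarrow$ (iv) is automatic, since any $*$-homomorphism into a nuclear C*-algebra is nuclear.

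To bring in item (i) I would use that $\N\P(X)^\be = B_{[\un{0},\infty]}$ and that $B_{[\un{0},\infty]}/B_{(\un{0},\infty]} \cong A/(A\cap B_{(\un{0},\infty]}) = A/I$, exactly as in Theorem \ref{T:nuc NO}. Feeding the commutative diagram of short exact sequences
\[
0 \to I \to A \to A/I \to 0 \qquad\text{over}\qquad 0 \to B_{(\un{0},\infty]} \to B_{[\un{0},\infty]} \to B_{[\un{0},\infty]}/B_{(\un{0},\infty]} \to 0
\]
into \cite[Proposition A.6]{Kat04} yields (iii) $\iff$ (i). It then remains only to close the loop with the single substantial implication (iii) $\Rightarrow$ (vi): nuclearity of $A \hookrightarrow B_{[\un{0},\infty]}$ forces $\N\P(X)^\be$ to be nuclear. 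Once this is in hand, (iii) produces (vi), hence (v) and (vii), while (vii) produces (iv), hence (iii), (ii) and (i), so all seven items are equivalent.

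For the implication (iii) $\Rightarrow$ (vi) I would transport the induction of Theorem \ref{T:nuc NO}. Nuclearity of the embedding first gives exactness of $A$, hence of $\N\P(X)$ by Theorem \ref{T:ex con}, so that all algebras involved are exact and Lemma \ref{L:max-ideal-inc} and Propositions \ref{P:nuc-quot}, \ref{P:nuc-ext} become available. I would induct on $d = |\supp \un{m}'|$ to show every box embeds nuclearly, $B_{[\un{m},\un{m}+\un{m}']}\hookrightarrow B_{[\un{m},\infty]}$, and then pass to the direct limit over the boxes $B_{[\un{0},m\cdot\un{1}_F]}$ to conclude nuclearity of $B_{[\un{0},\infty]}$. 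For the base step I would apply \cite[Proposition B.8]{Kat04} to the orthogonal-sum correspondence $X_{\un{m}} = \sum\{t_p(X_p)\mid \vartheta(p)=\un{m}\}$, whose compacts give $B_{[\un{m},\un{m}]}$. For the inductive step I would fix $i\in\supp\un{m}'$, decompose $B_{[\un{m},\un{m}+\un{m}']}=\sum_{k=0}^{m_i'}B_k$ into $d$-dimensional boxes, set $A_n=\sum_{k\ge n}B_k$ and $A_n^\infty=B_{[\un{m}+n\Bi,\infty]}$, and run the two-step argument — first Proposition \ref{P:nuc-quot} on the quotient, then Proposition \ref{P:nuc-ext} on the extension — exactly as before.

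The main obstacle is that, unlike the honest lattice, each fibre $X_{\un{n}}$ is now a sum of orthogonal correspondences indexed by $\vartheta^{-1}(\un{n})$, so the boxes $B_{[\un{n},\un{n}]}\cong\K X_{\un{n}}$ carry genuine off-diagonal matrix entries and the two structural facts feeding Proposition \ref{P:nuc-ext} are no longer automatic. Concretely I must re-establish that $A_{n+1}$ is an ideal in $A_n=B_n+A_{n+1}$ with $A_n/A_{n+1}\cong B_n/(A_{n+1}\cap B_n)$, and the Nica-covariance absorption $B_{[\un{m}+(n+1)\Bi,\un{m}+(n+1)\Bi]}\cdot A_n\subseteq A_{n+1}$ together with the fact that the approximate identity of the top corner box serves both $A_{n+1}$ and $A_{n+1}^\infty$. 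The degree bookkeeping follows from axiom (C2), since $\vartheta(p\vee q)=\vartheta(p)\vee\vartheta(q)$ forces the $i$-th coordinate of any surviving product to jump to $\un{m}_i+n+1$; the delicate point is controlling the cross terms $t_p(X_p)t_q(X_q)^*$ with $p\neq q$ in the same fibre. This is precisely where Definition \ref{D:minimality} enters: the hypothesis that distinct minimal elements of $\vartheta^{-1}([\un{m},\infty])$ satisfy $p\vee q=\infty$ makes the corresponding corners interact orthogonally (their products vanish by Nica-covariance), so that $B_{[\un{m},\infty]}$ decomposes over its minimal elements and the boxes retain the clean ideal and direct-sum structure of the free abelian case. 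Verifying this orthogonal decomposition — the observation that elements inside a box share a common minimal — is the technical heart of the argument, and once it is secured the inductive machinery of Theorem \ref{T:nuc NO} applies without further change.
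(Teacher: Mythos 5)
Your outer loop of equivalences --- (ii)$\Leftrightarrow$(iii)$\Leftrightarrow$(iv) via the expectations, (v)$\Leftrightarrow$(vi)$\Leftrightarrow$(vii) via the faithful-amplification argument of \cite[Theorem 4.5.2]{BO08}/\cite[Proposition A.13]{Kat04}, and (i)$\Leftrightarrow$(iii) via \cite[Proposition A.6]{Kat04} --- matches the paper. The gap is in the single substantial implication. You keep the full ray $A_{n+1}^\infty = B_{[\un{m}+(n+1)\Bi,\infty]}$ as the ideal in the bottom row of the extension diagram and assert that, thanks to minimality, the approximate identity of the top corner box $B_{[\un{m}+(n+1)\Bi,\un{m}+(n+1)\Bi]}$ serves both $A_{n+1}$ and $A_{n+1}^\infty$, so that the machinery of Theorem \ref{T:nuc NO} applies ``without further change''. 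This is false in general. Proposition \ref{P:nuc-ext} requires $(e_\la)$ to be an approximate identity \emph{of the ideal} $I'$, and the approximate identity of $B_{[\un{m}+(n+1)\Bi,\un{m}+(n+1)\Bi]}$ is supported on the exact fibre $\vartheta^{-1}(\un{m}+(n+1)\Bi)$. In a general quasi-lattice a minimal element of $\vartheta^{-1}([\un{n},\infty])$ need not have image $\un{n}$, and then the minimality property makes it $\vee$-infinite with, hence orthogonal to, the entire exact fibre; so the exact-fibre approximate identity \emph{annihilates} the corresponding $\psi_q(\K X_q)$ instead of approximating it. Concretely, in $(B(m,m), B_+(m,m))$ with $m \geq 2$ and $\un{n} = (0,2)$ one has $\vartheta^{-1}((0,2)) = \{b^2\}$, while $q = bab$ is minimal in $\vartheta^{-1}([(0,2),\infty])$ with $q \vee b^2 = \infty$, whence $\psi_{b^2}(k_\la)\psi_{bab}(\K X_{bab}) = (0)$. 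Thus $A_{n+1}^\infty$ does not admit the approximate identity your induction needs, and the very first application of Proposition \ref{P:nuc-ext} fails.

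The paper's resolution is exactly the piece you gesture at but do not supply: the targets of the induction are shrunk from $A_n^\infty$ to the corners $C_n^\infty := f_n A_n^\infty f_n$, where $f_n$ is the projection (a weak*-limit) assembled from the \emph{finite} set $F_n$ of minimal elements of the box preimage $\vartheta^{-1}([\un{m}+n\Bi, \un{m}+\un{m}'])$ --- finiteness coming from (C1); taking minimals of the ray $[\un{m}+n\Bi,\infty]$ instead would not even give a finite sum. One must then prove three claims: that $C_n^\infty = \ol{\spn}\{\psi_p(\K X_p) \mid p \in F_n \cdot P\}$ with $f_n$ acting as a unit on $A_n$; that the embeddings $B_n \hookrightarrow C_n^\infty$ and $A_{n+1} \hookrightarrow C_{n+1}^\infty$ are nuclear (by compressing the inductive hypothesis); and that $C_{n+1}^\infty$ is an ideal of $C_n^\infty$, via $f_{n+1} f_n = f_{n+1}$, which again rests on uniqueness of the minimal element below any given one. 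Only after this does $e_\mu = \sumoplus_{q \in F_{n+1}} \psi_q(k_{q,\mu})$ become an approximate identity of $C_{n+1}^\infty$ satisfying $e_\mu A_n \subseteq A_{n+1}$, so that Propositions \ref{P:nuc-quot} and \ref{P:nuc-ext} can be run with rows $0 \to C_{n+1}^\infty \to C_n^\infty \to C_n^\infty/C_{n+1}^\infty \to 0$, and the inductive statement follows from $C_n^\infty \subseteq A_n^\infty$. Note also that the paper runs this induction inside the diagonal algebra $E(\N\P(X))$, proving (ii)$\Rightarrow$(v) rather than your (iii)$\Rightarrow$(vi); this is precisely why the expectation $E$ is hypothesised and why items (ii) and (v) appear at all, and your full-box variant would additionally have to control the off-diagonal terms $t_p(X_p)t_q(X_q)^*$, $p \neq q$, that the diagonal reduction eliminates.
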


\begin{proof}
Since $E$ is a conditional expectation and $A \subseteq E(\N\P(X)) \subseteq \N\P(X)^\be$ we get that item (ii) is equivalent to item (iii).
Furthermore the proof of \cite[Theorem 4.5.2]{BO08} implies that item (v) is equivalent to item (vi).
Notice that the argument therein requires just the existence of a faithful conditional expectation $E$ so that $E \otimes_{\max} \id_D$ and $E \otimes \id_D$ are also faithful.
Hence the proof reads the same with that of Theorem \ref{T:nuc NO} as long as we check how the induction works so that item (ii) implies item (v).
This will be done in a number of steps.

For the base step, nuclearity of $A \hookrightarrow E(B_{[\un{0}, \infty]})$ gives nuclearity of the orthogonal embeddings $\psi_p(\K X_p) \hookrightarrow E(B_{[\vartheta(p), \infty]})$.
Hence the embedding $E(B_{[\un{m}, \un{m}]}) \hookrightarrow E(B_{[\un{m}, \infty]})$ is nuclear.
Fix $\un{m}, \un{m}' \in \bZ_+^N$ such that $|\supp \un{m}'| = d+1$.
As in the proof of Theorem \ref{T:nuc NO} let $i \in \supp \un{m}'$ and write
\[
B_k 
:= E(B_{[\un{m} + k \Bi,\un{m} + k\Bi + (\un{m}'- m_i' \Bi)]})
= \ol{\spn} \{\psi_p(\K X_p) \mid \vartheta(p) \in [\un{m} + k \Bi,\un{m} + k\Bi + (\un{m}'- m_i' \Bi)] \},
\]
and likewise
\[
A_n := \sum_{k=n}^{m_i'} B_k
\qand A_n^\infty:= {B}_{[\un{m} + n\Bi, \infty]}
\qfor 
0 \leq n \leq m_i' - 1.
\]
Suppose that the embeddings $B_n \hookrightarrow A_n^\infty$ and $A_{n+1} \hookrightarrow A_{n+1}^\infty$ are nuclear.
We shall show that the embedding $A_n \hookrightarrow A_n^\infty$ is nuclear.

We need to make some preliminary comments that use minimality.
Notice that if $\un{m}' \leq \un{m}'' \leq \infty$ and an element is minimal in $\vartheta^{-1}([\un{m}, \un{m} + \un{m}'])$ then it is also minimal in $\vartheta^{-1}([\un{m}, \un{m} + \un{m}''])$.
The minimality property suggests that if $p \in \vartheta^{-1}([\un{m}, \infty])$ then there is a \emph{unique} element $r$ that is minimal in $\vartheta^{-1}([\un{m}, \infty])$ such that $p \in r P$.
If the minimal $r$ is not in $\vartheta^{-1}([\un{m}, \un{m} + \un{m}'])$ then $r \vee q = \infty$ for every $q \in \vartheta^{-1}([\un{m}, \un{m} + \un{m}'])$.
Next we set the following sets for $n = 0, \dots, m_i'$, based on $\un{m}, \un{m}' \in \bZ_+^N$ and the fixed $i \in \supp \un{m}'$:
\begin{enumerate}
\item $F_{n} := \{r \in P \mid r \text{ is minimal in } \vartheta^{-1}([\un{m} + n \Bi, \un{m} + \un{m}'])\}$,
\item $F_{n + 1} := \{q \in P \mid q \text{ is minimal in } \vartheta^{-1}([\un{m} + (n+1) \Bi, \un{m} + \un{m}'])\}$,
\item $G_{n} := \{s \in P \mid s \text{ is minimal in } \vartheta^{-1}([\un{m} + n \Bi, \un{m} + n \Bi + (\un{m}' - m_i' \Bi)])\}$.
\end{enumerate}
It follows that $G_n \subseteq F_n$.
The schematic of these sets is given in the following figure.

\begin{center}
\begin{tikzpicture}[domain=0:10]
\draw[->] (-8,0) -- (-2,0);
%\draw[->] (-7.8,-.2) -- (-7.8,3.5);

\draw[->] (0,0) -- (6,0);
%\draw[->] (.2,-.2) -- (.2,3.5);

%f
\draw[color=black, thick] (-7,2.5) -- (-3,2.5);  
\draw[color=black, thick] (2,2.5) -- (5,2.5);  
\draw[color=black, thick] (-7,0) -- (-7,2.5); 

\draw[color=black, thick] (-3,0) -- (-3,2.5);  
\draw[color=black, thick] (1,0) -- (1,2.5); 
\draw[color=black, thick] (5,0) -- (5,2.5);  
\draw[color=black, thick] (2,0) -- (2,2.5);  

\draw[dashed, color=black] (3,0) -- (3,2.5);  
\draw[dashed, color=black] (4,0) -- (4,2.5);  

\draw[dashed, color=black] (-6,0) -- (-6,2.5);  
\draw[dashed, color=black] (-5,0) -- (-5,1);  
\draw[dashed, color=black] (-5,1.55) -- (-5,2.5);  
\draw[dashed, color=black] (-4,0) -- (-4,2.5);  
\draw[dashed, color=black] (2,-.75) -- (2,0);  
\draw[dashed, color=black] (2,2.5) -- (2,3.45);  

%Labels
\node (a) at (-5,1.25) {$F_{n}$};
\node (b) at (3.5,1.25) {$F_{n+1}$};
\node (c) at (.6,1.25) {$G_n$};
\node (d) at (-7,-.3) {$\un{m} + n \Bi$};
%\node (c) at (-5, -.5) {{\small $\un{m} + (n+1) \Bi$}};
\node (e) at (-3,-.3) {{\small $\un{m} + m_i' \Bi$}};
\node (f) at (1,-.3) {$\un{m} + n \Bi$};
\node (g) at (2, -1) {{\small $\un{m} + (n+1) \Bi$}};
\node (h) at (5,-.3) {{\small $\un{m} + m_i' \Bi$}};
\node (i) at (-7,2.9) {{\small $\un{m} + n \Bi +(\un{m}' - m_i' \Bi)$}};
\node (i) at (0.4,2.9) {{\small $\un{m} + n \Bi +(\un{m}' - m_i' \Bi)$}};
\node (i) at (1.8,3.6) {{\small $\un{m} + (n+1) \Bi +(\un{m}' - m_i' \Bi)$}};
\node (k) at (-2.8,2.9) {{\small $\un{m} + \un{m}'$}};
\node (l) at (5.2,2.9) {{\small $\un{m} + \un{m}'$}};
\end{tikzpicture}

Figure for $F_n$, $G_n$ and $F_{n+1}$.
\end{center}

\smallskip

\noindent
The minimality property shows that $p \vee p' = \infty$ whenever $p, p'$ are distinct elements in each of the above sets, in which case Nica-covariance implies that $\psi_{p}(k_{p}) \psi_{p'}(k_{p'}) = 0$.
Let $(k_{p, \la})_{\la \in \La_p}$ denote an approximate identity of $\K X_{p}$.
Set $\La$ be the directed product of the $\La_r$ for $r \in F_n$ (resp. for $F_{n+1}$) and set the projections that appear as weak*-limits by 
\[
f_{n} := \text{w*-}\lim_{\la \in \La} \sumoplus_{r \in F_n} \psi_{r}(k_{r, \la}), \;
f_{n + 1} := \text{w*-}\lim_{\la \in \La} \sumoplus_{q \in F_{n+1}} \psi_{q}(k_{q, \la})
\; \text{and} \;
g_{n} := \text{w*-}\lim_{\la \in \La} \sumoplus_{s \in G_n} \psi_{s}(k_{s, \la}).
\]
We define the ``corners''
\[
C_n^\infty := f_n A_n^\infty f_n
\qand
C_{n+1}^\infty := f_{n+1} A_{n+1}^\infty f_{n+1}.
\]

\smallskip

\noindent
{\bf Claim 1.} With the aforementioned notation we have that
\[
C_n^\infty = f_n A_n^\infty = \ol{\spn} \{\psi_p(\K X_p) \mid p \in F_n \cdot P\}
\]
and
\[
C_{n+1}^\infty = f_{n+1} A_{n+1}^\infty = \ol{\spn} \{\psi_p(\K X_p) \mid p \in F_{n+1} \cdot P\}.
\]
Moreover we have that
\[
g_n A_n^\infty g_n
=
g_n A_n^{\infty}
=
\ol{\spn}\{ \psi_p( \K X_p) \mid p \in G_n \cdot P \} 
\subseteq
C_n^\infty,
\]
and that
\[
f_n a = a \; \foral a \in A_n, \;
f_{n+1} b = b \; \foral b \in A_{n+1} 
\; \text{ and } \;
g_n c = c \; \foral c \in B_n.
\]

\smallskip

\noindent
{\bf Proof of Claim.}
We begin with $C_n^\infty$.
It suffices to show that $C_n^\infty$ attains the claimed linear span form, and symmetry in the small fixed point algebra will give that $f_n A_n^\infty = A_n^\infty f_n = f_n A_n^\infty f_n$.
To this end let $p \in \vartheta^{-1}([\un{m} + n \Bi, \infty])$ and we have two cases.

\noindent
-- Case 1. 
Suppose that $p \in F_n \cdot P$.
Then there is a unique minimal element $r' \in F_n$ with $p = r' p'$ and $p \vee r = \infty$ for all $r \in F_n$ with $r \neq r'$.
Therefore for $k_p \in \K X_p$ we get that
\begin{align*}
f_n \psi_p(k_p) 
& = 
\text{w*-}\lim_\la \sumoplus_{r \in F_n} \psi_r(k_{r, \la}) \psi_p(k_p) 
 = 
\text{w*-}\lim_\la \psi_{r'}(k_{r', \la}) \psi_{r'p'}(k_{r'p'})
=
\psi(k_p).
\end{align*}

\noindent
-- Case 2.
Suppose that $p \notin F_n \cdot P$ and let $r'$ be the minimal element in $\vartheta^{-1}([\un{m} + n \Bi, \infty])$ so that $p \in r' P$.
Then $r' \vee r = \infty$ and so $p \vee r = \infty$ for all $r \in F_n$, and Nica-covariance yields $f_n \psi_p(\K X_p) = 0$.

This shows that $C_n^\infty$ is the closed linear span of the $\psi_p(\K X_p)$ for $p \in F_{n} \cdot P$.
The same reasoning applies for $C_{n+1}^\infty$.
Applying the same argument on $g_n$ gives also that
\begin{align*}
g_n A_n^\infty g_n
& =
\ol{\spn}\{ \psi_p( \K X_p) \mid p \in G_n \cdot P \} 
 \subseteq
\ol{\spn}\{ \psi_p( \K X_p) \mid p \in F_n \cdot P \} 
=
C_n^\infty.
\end{align*}

For the case of $B_n$, $A_n$ and $A_{n+1}$ we proceed likewise keeping in mind that for every $k_p$ appearing in each of these sets, the index $p$ is larger than a unique minimal element in the corresponding set.
The proof of the claim is complete.
\hfill{$\Box$}

\smallskip

\noindent
{\bf Claim 2.}
The embeddings $B_n \hookrightarrow C_n^\infty$ and $A_{n+1} \hookrightarrow C_{n+1}^\infty$ are nuclear.

\smallskip

\noindent
{\bf Proof of the Claim.}
By using the approximate identities and Claim 1 we have that the mapping
\[
B_n = g_n B_n g_n \hookrightarrow g_n A_n^\infty g_n \subseteq C_n^\infty
\]
is nuclear.
In a similar way we have that the embedding
\[
A_{n+1} = f_{n+1} A_{n+1} f_{n+1} \hookrightarrow f_{n+1} A_{n+1}^\infty f_{n+1} = C_{n+1}^\infty
\]
is nuclear, and the proof of the claim is complete.
\hfill{$\Box$}
\smallskip

\noindent
{\bf Claim 3.}
The C*-algebra $C_{n+1}^\infty$ is an ideal in $C_n^\infty$.

\smallskip

\noindent
{\bf Proof of the Claim.}
First we show that $C_{n+1}^\infty \subseteq C_n^\infty$.
To this end let $\psi_p(k_p) \in C_{n+1}^\infty$ with $p  = q p'$ for $q \in F_{n+1}$.
Let $r \in F_n$ be the unique element with $q = rq'$.
But then $p = r q' p' \in F_n \cdot P$ and so $\psi_p(k_p) \in C_n^\infty$ as well.

Next we show that $C_{n+1}^\infty$ is an ideal in $C_n^\infty$.
For that we need to show that $f_{n+1} f_n = f_{n+1}$ as then we will have that
\begin{align*}
C_{n+1}^\infty \cdot C_n^\infty
& =
f_{n+1} A_{n+1}^\infty A_n^\infty f_n
\subseteq
f_{n+1} A_{n+1}^\infty f_n \\
& =
f_{n+1} A_{n+1}^\infty f_{n+1} f_n
=
f_{n+1} A_{n+1}^{\infty} f_{n+1}
=
C_{n+1}^\infty.
\end{align*}
To this end, by definition we have that
\[
f_{n+1} f_n
=
\text{w*-}\lim_\mu \text{w*-}\lim_\la \sumoplus_{q \in F_{n+1}} \psi_q(k_{q,\mu}) \sumoplus_{r \in F_n} \psi_r(k_{r,\la}). 
\]
Take a $q \in F_{n+1}$.
If $q \in F_n$ as well, then $q \vee r = \infty$ for any $r \in F_n$ with $r \neq q$ and so
\[
\text{w*-}\lim_\mu \text{w*-}\lim_\la \psi_q(k_{q,\mu}) \sumoplus_{r \in F_n} \psi_r(k_{r,\la})
=
\text{w*-}\lim_\mu \text{w*-}\lim_\la \psi_q(k_{q,\mu}) \psi_q(k_{q,\la})
=
\text{w*-}\lim_\mu \psi_q(k_{q,\mu}),
\]
as the last element is a projection.
If $q \notin F_n$ then consider $r' \in F_n$ be the unique minimal element so that $q = r' q'$.
In this case we have that $q \vee r = \infty$ for any $r \in F_n$ with $r \neq r'$ and so
\[
\text{w*-}\lim_\la \psi_q(k_{q,\mu}) \sumoplus_{r \in F_n} \psi_r(k_{r,\la})
=
\text{w*-}\lim_\la \psi_q(k_{q,\mu}) \psi_{r'}(k_{r', \la})
=
\psi_q(k_{q,\mu}).
\]
As this holds for all $q \in F_{n+1}$ we derive that
\[
f_{n+1} \cdot f_n
=
\text{w*-}\lim_\mu \text{w*-}\lim_\la \sumoplus_{q \in F_{n+1}} \psi_q(k_{q,\mu}) \sumoplus_{r \in F_n} \psi_r(k_{r,\la})
=
\text{w*-}\lim_\mu \sumoplus_{q \in F_{n+1}} \psi_q(k_{q,\mu})
=
f_{n+1},
\]
and the proof of the claim is complete.
\hfill{$\Box$}

\smallskip

Now we check that the approximate identity hypothesis of Proposition \ref{P:nuc-ext} is satisfied for $A_n$ and $C_{n+1}^\infty$.
Let $(e_\mu)_\mu$ be the approximate identity of $C_{n+1}^\infty$ with
\[
e_\mu:= \sumoplus_{q \in F_{n+1}} \psi_q(k_{q,\mu}).
\]
However for $\psi_p(k_p) \in B_n$ we have that $\vartheta(p) \in [\un{m} + n \Bi, \un{m} + n \Bi + (\un{m}' - m_i' \Bi)]$, and so for $q \in F_{n+1}$ with $q \vee p < \infty$ we get
\[
\vartheta(q \vee p) = \vartheta(q) \vee \vartheta(p) \in [\un{m} + (n+1) \Bi, \un{m} + \un{m}'].
\]
Thus $e_\mu B_n \subseteq A_{n+1}$.
As $e_\mu$ is also an approximate identity in and for $A_{n+1}$ we have that
\[
e_\mu A_n = e_\mu B_n + e_\mu A_{n+1} \subseteq A_{n+1}.
\]

Now we are set to finish the proof of the inductive step.
That is, first we have observed in Claim 2 that the middle arrow in the following diagram
\begin{align*}
\xymatrix{
0 \ar[r] & A_{n+1} \cap B_n \ar[r] \ar[d] & B_n \ar[r] \ar[d] & B_n / A_{n+1} \cap B_n \ar[r] \ar[d] & 0\\
0 \ar[r] & C_{n+1}^{\infty} \ar[r] & C_n^{\infty} \ar[r] & C_n^{\infty} / C_{n+1}^{\infty} \ar[r] & 0
}
\end{align*}
is nuclear and so the map
\[
A_{n}/A_{n+1} \simeq B_n / (A_{n+1} \cap B_n) \to C_n^\infty / C_{n+1}^\infty
\]
is nuclear by Proposition \ref{P:nuc-quot}.
Moreover we deduced that the map $A_{n+1} \hookrightarrow C_{n+1}^\infty$ is nuclear and that the hypotheses of Proposition \ref{P:nuc-ext} are satisfied.
Hence applying this proposition to the diagram
\begin{equation*}
\xymatrix{
0 \ar[r] & A_{n+1} \ar[r] \ar[d] & A_n \ar[r] \ar[d] & A_n / A_{n+1} \ar[r] \ar[d] & 0\\
0 \ar[r] & C_{n+1}^{\infty} \ar[r] & C_n^{\infty} \ar[r] & C_n^{\infty} / C_{n+1}^{\infty} \ar[r] & 0
}
\end{equation*}
yields that the middle vertical embedding is nuclear.
As $C_n^\infty \subseteq A_n^\infty$ the map $A_n \hookrightarrow A_n^\infty$ is nuclear, and the proof is complete.
\end{proof}

%%%%%%%%%%%%%%%%%%%%%%%%%%%%%%%%
\begin{proposition}\label{P:CNP is in}
Let $\vartheta \colon (G,P) \to (\bZ^N, \bZ_+^N)$ be a controlled map and $X$ be a compactly aligned product system over $(G,P)$.
If $\N\P(X)$ is a CNP-relative quotient of $\N\T(X)$ then it satisfies the hypotheses of Theorem \ref{T:nuc con}.
\end{proposition}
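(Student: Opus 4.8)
The goal is to verify, for a CNP-relative quotient $\N\P(X)$, the three structural hypotheses of Theorem~\ref{T:nuc con}: that $\N\P(X)$ is a $\bT^N$-equivariant quotient of $\N\T(X)$, that $A$ embeds isometrically, and that there is a faithful conditional expectation $E$ onto $\ol{\spn}\{\psi_p(\K X_p)\mid p\in P\}$ whose amplifications $E\otimes_{\max}\id_D$ and $E\otimes\id_D$ remain faithful. I would treat these in turn. For $\bT^N$-equivariance, the point is that every defining relation $\sum_{p\in F}\psi_p(k_p)=0$ (for $\{k_p\}_{p\in F}\in\F$) involves only gauge-invariant elements, since each $\psi_p(k_p)=t_p(\xi_p)t_p(\eta_p)^*$ has gauge weight $\vartheta(p)-\vartheta(p)=\un{0}$. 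Hence the kernel $\mathcal I$ of $\N\T(X)\to\N\P(X)$ is generated by $\be$-fixed elements, so $\be_{\un{z}}(\mathcal I)=\mathcal I$ for every $\un{z}\in\bT^N$ and the gauge action descends to $\N\P(X)$. Integrating against the Haar measure of $\bT^N$ then produces a faithful conditional expectation $\Phi\colon\N\P(X)\to\N\P(X)^\be$, and since $\bT^N$ is amenable the amplifications $\Phi\otimes_{\max}\id_D$ and $\Phi\otimes\id_D$ stay faithful.

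For the isometric embedding of $A$, I would exploit condition (C1). If $S\subseteq P$ is bounded, say $s\leq u$ for all $s\in S$, then $\vartheta(s)\leq\vartheta(u)$, so $\vartheta(S)$ lies in the finite interval $[\un{0},\vartheta(u)]$ of $\bZ_+^N$; as $\vartheta|_P$ is finite-to-one, $S$ is finite and so contains a maximal element. Thus $(G,P)$ has the property that every bounded subset of $P$ contains a maximal element, whence by \cite[Lemma 3.15, Theorem 4.1]{SY11} the system $X$ is $\wt\phi$-injective and $A$ embeds isometrically into $\N\O(X)$. Since the quotient map $\N\T(X)\to\N\O(X)$ factors through $\N\P(X)$, the composite $A\to\N\P(X)\to\N\O(X)$ is the isometric inclusion of $A$ into $\N\O(X)$; in particular $A\to\N\P(X)$ is injective, hence isometric.

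The third hypothesis is the main obstacle. The key observation is that the relations are homogeneous not merely for the $\bZ^N$-gauge action but for the finer $G$-grading in which $t_p(\xi)t_q(\eta)^*$ has degree $pq^{-1}$: the degree-$e$ fibre is precisely $\ol{\spn}\{t_p(\xi)t_p(\eta)^*\mid p\in P\}=\ol{\spn}\{\psi_p(\K X_p)\mid p\in P\}$, and each relation $\sum_{p\in F}\psi_p(k_p)$ lies in it. Writing $\delta$ for the associated coaction of $G$ on $\N\T(X)$, one has $\delta(r)=r\otimes 1$ for $r$ in the degree-$e$ fibre, so that $\delta\big(b\,(\sum_p\psi_p(k_p))\,b'\big)\in\mathcal I\otimes\cstar(G)$ for all $b,b'$; thus $\mathcal I$ is $\delta$-invariant, i.e.\ a $G$-graded ideal, and $\delta$ descends to a coaction on $\N\P(X)$ with fixed-point algebra $\ol{\spn}\{\psi_p(\K X_p)\mid p\in P\}$. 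I would then take $E$ to be the conditional expectation attached to this descended coaction, which coincides with the diagonal map $E=E_0\circ\Phi$, where $E_0(x)=\text{w*-}\lim_\la\sum_p\psi_p(k_{p,\la})\,x\,\psi_p(k_{p,\la})$ is built from the corners exactly as in Remark~\ref{R:cond exp fock}.

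The delicate point is faithfulness of $E$ on the quotient together with its stability under $\otimes_{\max}\id_D$ and $\otimes\id_D$: a priori the CNP relations could create a nonzero positive element annihilated by all the corners. I would resolve this through the graded structure established above: a quotient of a topologically $G$-graded C*-algebra by a graded ideal is again topologically graded, so the expectation onto the degree-$e$ fibre descends to a \emph{faithful} one on $\N\P(X)$; and since $\delta\otimes_{\max}\id_D$ (resp.\ $\delta\otimes\id_D$) is again such a coaction on $\N\P(X)\otimes_{\max}D$ (resp.\ $\N\P(X)\otimes D$) with the same fixed-point structure, the amplifications $E\otimes_{\max}\id_D$ and $E\otimes\id_D$ are faithful as well. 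Concretely, the technical heart is to transport the argument of Remark~\ref{R:cond exp fock} (cf.\ \cite[Proposition 5.4]{KL16}) from the Fock representation to the CNP-relative quotient, which the $\delta$-invariance of $\mathcal I$ precisely licenses.
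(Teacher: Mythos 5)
Your first two steps are fine and essentially coincide with the paper's: the CNP relations are $\be$-invariant, so the gauge action descends; and (C1) gives that every bounded $S\subseteq P$ is finite (your version) or has no strictly increasing chain (the paper's version), hence has a maximal element, so $X$ is $\wt\phi$-injective by \cite[Lemma 3.15]{SY11} and the factorization $\N\T(X)\to\N\P(X)\to\N\O(X)$ makes the copy of $A$ in $\N\P(X)$ isometric.

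The gap is in your third step, and it sits exactly where the whole difficulty of the proposition lies. You correctly observe that the kernel $\I$ of $\N\T(X)\to\N\P(X)$ is generated by elements of the degree-$e$ fibre of the $G$-grading, hence is a graded (induced) ideal, so the quotient is again topologically $G$-graded and the canonical expectation onto the image of the degree-$e$ fibre descends. But your next assertion --- that this descended expectation is automatically \emph{faithful} --- is a false general principle. Topological grading of a quotient yields the \emph{existence} of the expectation, never its faithfulness: in Exel's theory, faithfulness of the quotient expectation is equivalent to the quotient being the \emph{reduced} cross-sectional C*-algebra of the quotient Fell bundle, and this fails in general. For instance, if $G$ is a non-exact group and $I\lhd A$ is a $G$-invariant ideal witnessing non-exactness, then $(A\rtimes_r G)/(I\rtimes_r G)$ is a quotient of a topologically graded C*-algebra with faithful expectation by a graded ideal, yet its canonical expectation is not faithful, precisely because this quotient differs from $(A/I)\rtimes_r G$. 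The groups covered by Theorem \ref{T:nuc con} are typically non-amenable (right-angled Artin groups, $B(m,m)$), so no soft coaction/grading argument can exclude the scenario you yourself flag, namely a nonzero positive element of $\N\P(X)$ annihilated by all the corners; this must be excluded by hand. That exclusion is the bulk of the paper's proof: one forms $\Phi$ from the pair $(\wt\pi\otimes I,\,\wt{t}\otimes U)$, mapping $\N\P(X)\to\N\P(X)\otimes\ca_r(G)$, and proves that $\Phi$ is \emph{injective} by an induction over the cores $\wt{B}_{[\un{0},\un{m}]}$, resting on a Claim (via (C5) and Nica covariance) that off-diagonal terms $t(X_q)^*\psi_{p,q}(k_{p,q})t(X_p)$ either vanish or land in strictly smaller cores; only then is $E=(\id\otimes\tau_e)\circ(E'\otimes\id)\circ\Phi$ faithful, and the $\otimes_{\max}$ statement needs yet another separate argument (commuting representations plus \cite[Lemma 4.1.8]{BO08}), not merely the remark that $\delta\otimes_{\max}\id_D$ is again a coaction. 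Your closing appeal to ``transporting'' the argument of Remark \ref{R:cond exp fock} is not licensed by $\delta$-invariance of $\I$: that argument uses faithfulness of the Fock representation of $\N\T(X)$, which is exactly what is unavailable after passing to the quotient. Without a proof of the injectivity of $\Phi$ (equivalently, of faithfulness of the descended expectation, or of amenability of the quotient Fell bundle), the third hypothesis of Theorem \ref{T:nuc con} remains unverified.
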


\begin{proof}
First we show that $X$ is $\wt\phi$-injective and so $A$ embeds isometrically in $\N\O(X)$.
As $A \to \N\P(X)$ factors through $A \to \N\O(X)$ we have that $A$ embeds isometrically in $\N\P(X)$.
By \cite[Lemma 3.15]{SY11} we have to show that if $S \subseteq P$ is bounded then it has a maximal element.
To reach a contradiction let $p \in P$ such that $s \leq p$ for all $s \in S$ and suppose there exists a sequence $s_1 < s_2 < \cdots \leq p$ of distinct elements in $S$.
As $\vartheta$ is order-preserving, we have that $\vartheta(S)$ is bounded by $\vartheta(p)$ in $\bZ_+^N$, and thus finite.
However, since all $s_n$ are comparable, by (C4) and (C5) we have that $\vartheta(s_1) < \vartheta(s_2) < \cdots$ which contradicts finiteness of $\vartheta(S)$.

Next we show the existence of the faithful conditional expectation.
Suppose that $\N\P(X) = \ca(\wt\pi, \wt{t})$ and notice that it inherits the $\bT^N$-action from $\N\T(X)$. Also fix $U \colon G \to \B(\ell^2(G))$ be the left regular unitary representation of $G$.
As in the proof of \cite[Lemma 3.1]{DK18a} the family $(\wt{\pi} \otimes I, \wt{t} \otimes  U)$ with
\[
(\wt{t} \otimes U)(\xi_p) = \wt{t}_p(\xi_p) \otimes U_p
\foral \xi_p \in X_p,
\]
defines a representation 
\[
\Phi \colon \N\P(X) \to \N\P(X) \otimes \ca_r(G).
\]

We show that $\Phi$ is injective.
Let $E' \colon \N\P(X) \to \N\P(X)^\be$ be the faithful conditional expectation through the inherited action $\{\be_{\un{z}} \}_{\un{z} \in \bT^N}$ on $\N\P(X)$.
By using the action $\be \otimes \id_{\B(\ell^2(G))}$ we have that the map
\[
E' \otimes \id_{\B(\ell^2(G))} \colon \N\P(X) \otimes \B(\ell^2(G)) \to \N\P(X)^\be \otimes \B(\ell^2(G))
\]
is a faithful conditional expectation.
Thus $\Phi$ is injective if and only if it is injective on the $\bT^N$-fixed point algebra; equivalently, if it is injective on the cores $B_{[\un{0}, \un{m}]}$.
Write $(\pi, t, \psi) = (\Phi \wt\pi, \Phi \wt{t}, \Phi\wt{\psi})$ and recall that $k_{r,s} \in \K(X_s, X_r)$ so that $\wt\psi_{r,s}(k_{r,s}) \in \ol{\spn}\{\wt{t}(X_s) \wt{t}(X_r)^*\}$.
Note that
\begin{equation}\label{eq:inj}
\Phi(\wt{f}) =\wt{f} \otimes I
\foral
\wt{f} \in \spn\{\wt\psi_p(\K X_p) \mid \vartheta(p) \in [\un{0}, \un{m}]\},
\end{equation}
and so the restriction of $\Phi$ on these C*-subalgebras is injective.

\smallskip

\noindent
{\bf Claim.} Let $p, q$ be distinct with $\vartheta(p) = \un{n} = \vartheta(q)$.
If $\vartheta(r) = \vartheta(s) = \un{n}$ with $r \neq p$ or $s \neq q$, or if $r =s$, then
\[
t(X_q)^* \psi_{r,s}(k_{r,s}) t(X_p) = (0).
\]
Consequently if
\[
\sum \{ \psi_{r,s}(k_{r,s}) \mid \vartheta(r) = \vartheta(s) \in [\un{0}, \un{m}]\} = 0
\]
and $\un{0} \neq \un{n}$ is minimal in $[\un{0}, \un{m}]$ for which there exists $k_{p,q} \neq 0$ with distinct $p, q$ in $\vartheta^{-1}(\un{n})$ then
\[
t(X_q)^* \psi_{p,q}(k_{p,q}) t(X_p) \subseteq B_{(\un{0}, \un{m} - \un{n}]}.
\]

\smallskip

\noindent
{\bf Proof of the Claim.}
By (C5) we have that $p \vee q = \infty$.
By Nica-covariance the product is zero whenever the following are not simultaneously satisfied:
\begin{equation}\label{eq:unions}
p \vee r < \infty, q \vee s <\infty \text{ and } r^{-1}(p \vee r) \vee s^{-1}(q \vee s) < \infty.
\end{equation}
By (C5), if $r \neq p$ or $s \neq q$ and $\vartheta(r) = \un{n} =\vartheta(s)$ then the first or the second union does not exist.
Now let $r = s$ and suppose that the unions above do exist, i.e., 
there are $p', q', x, x', y, y' \in P$ such that $p p' = rx$, $q q' = ry$ and $xx' = yy'$.
Then we would have the contradiction $p \vee q < \infty$ as
\[
p p'x' = r xx' = ryy' = q q'y.
\]
Hence the unions in equation (\ref{eq:unions}) are not satisfied and the product is zero when $r = s$.

For the second part, let $\psi_{r,s}(k_{r,s})$ with $\vartheta(r) = \vartheta(s) \in [\un{0}, \un{m}]$.
If all three unions in equation (\ref{eq:unions}) are satisfied, then Nica-covariance yields
\[
t(\xi_q)^* \psi_{r,s}(k_{r,s}) t(\xi_p) \in \psi_{p^{-1} r w, q^{-1} s w}(\K(X_{p^{-1} r w}, X_{q^{-1} s w}))
\qfor
w := r^{-1}(p \vee r) \vee s^{-1}(q \vee s),
\]
otherwise the product is zero.
By the first part the product is zero when $r = s$.
For $r \neq s$ we see that if $w$ exists then
\[
\un{0} \leq - \vartheta(p) + \vartheta(p \vee r) \leq \vartheta(p^{-1} r w) \leq -\vartheta(p) + \vartheta(p) \vee \vartheta(r) \vee \vartheta(q) \vee \vartheta(s) \leq \un{m} - \un{n}.
\]
If $\vartheta(p^{-1} r w) = \un{0} = \vartheta(q^{-1} s w)$ then (C5) yields $p = p \vee r$.
Likewise $q = q \vee s$ and so $r \leq p$ and $s \leq q$.
Minimality of $\un{n}$ forces that either $(p,q) = (r,s)$ or that $k_{r,s} = 0$.
Thus for every $\xi_p \in X_p$ and $\xi_q \in X_q$ there are suitable $k'_{r',s'}$ with $\vartheta(r') = \vartheta(s') \in (\un{0}, \un{m} - \un{n}]$ so that
\begin{align*}
0 
& = \sum \{ t(\xi_q)^* \psi_{r,s}(k_{r,s})  t(\xi_p) \mid \vartheta(r) = \vartheta(s) \in [\un{0}, \un{m}]\} \\
& = t(\xi_q)^* \psi_{p,q}(k_{p,q}) t(\xi_p) + \sum \{ \psi_{r',s'}(k_{r',s'}') \mid \vartheta(r') = \vartheta(s') \in (\un{0}, \un{m} - \un{n}] \},
\end{align*}
and the proof of the claim is complete.
\hfill{$\Box$}
\smallskip

\smallskip

For the base step first we show that $\Phi$ is injective on every $\wt{B}_{[\un{m}, \un{m}]}$.
To this end let
\[
\wt{f} := \sum\{ \wt\psi_{r,s}(k_{r,s}) \mid \vartheta(r) = \vartheta(s) = \un{m}\} \in \ker\Phi.
\]
We have that $p \vee r = q \vee s = \infty$ for all $(r,s) \neq (p, q)$ in the sum, and so
\[
\psi_q(\K X_q) \psi_{p,q}(k_{p,q}) \psi_p(\K X_p) =
\psi_q(\K X_q) \Phi(\wt{f}) \psi_p(\K X_p) = 
(0).
\]
By using an approximate identity we have that $\psi_{p,q}(k_{p,q}) = 0$, and so $k_{p,q} = 0$ for all $(p,q)$ in the sum, giving that $\wt{f} = 0$.
Secondly we show that $\Phi$ is injective on $\wt{B}_{[\un{0}, \Bi]}$.
To this end let
\[
\wt{f} := \sum\{ \wt\psi_{r,s}(k_{r,s}) \mid \vartheta(r) = \vartheta(s) \in [\un{0}, \Bi]\} \in \ker\Phi.
\]
Let $p \neq q$ in $\vartheta^{-1}([\un{0}, \Bi])$; then by (C3) we have that $\vartheta(p) = \vartheta(q) = \Bi$.
By the Claim we derive that
\[
\psi_q(\K X_q) \psi_{p,q}(k_{p,q}) \psi_p(\K X_p) = \psi_q(\K X_q) \Phi(\wt{f}) \psi_p(\K X_p) = (0).
\]
An approximate identity argument shows that $k_{p,q} = 0$ whenever $p \neq q$.
Therefore
\[
\wt{f} = \sum\{ \wt\psi_{r}(k_{r}) \mid \vartheta(r) \in [\un{0}, \Bi]\},
\]
and so $\wt{f} = 0$ by equation (\ref{eq:inj}).
Next we show that $\Phi$ is injective on $\wt{B}_{[\un{n}, \un{n} + \Bi]}$.
To this end let
\[
\wt{f} := \sum\{ \wt\psi_{r,s}(k_{r,s}) \mid \vartheta(r) = \vartheta(s) \in [\un{n}, \un{n} + \Bi]\} \in \ker\Phi,
\]
with the understanding that for all $k_{r,s}$ in the sum with $r \neq s$ we have that either $\wt{\psi}_{r,s}(k_{r,s}) = 0$ or that $\wt\psi_{r,s}(k_{r,s}) \notin \wt{B}_{(\vartheta(r), \un{n} + \Bi]}$.
Suppose that there are $p \neq q$ such that $\wt\psi_{p,q}(k_{p,q}) \neq 0$ and assume first that this happens for $p, q \in \vartheta^{-1}(\un{n})$.
Then by the Claim we derive that
\[
\pi(A) \ni t(X_q)^* \psi_{p,q}(k_{p,q}) t(X_p) \subseteq B_{[\Bi, \Bi]}.
\]
As $\Phi$ is injective on $\wt{B}_{[\un{0}, \Bi]}$ and by using an approximate identity argument we get $\wt\psi_{r,s}(k_{r,s}) \in \wt{B}_{(\vartheta(r), \un{n} + \Bi]}$, which is a contradiction.
Hence $k_{r,s} = 0$ for all $r \neq s$ with $\vartheta(r) = \vartheta(s) = \un{n}$.
Now we can proceed as in the base case to show that $k_{r,s} = 0$ for all distinct $r, s$ in $\vartheta^{-1}(\un{n} + \Bi)$ as well, and get that
\[
\wt{f} = \sum\{ \wt\psi_{r}(k_{r}) \mid \vartheta(r) \in [\un{n}, \un{n} + \Bi]\}.
\]
But then $\wt{f} = 0$ by equation (\ref{eq:inj}).
This finishes the proof of the base step.

For the inductive step suppose that $\Phi$ is injective on all proper subsets $[\un{n}, \un{n}']$ of $[\un{0}, \un{m}]$ and we will show that it is injective on $\wt{B}_{[\un{0}, \un{m}]}$.
To this end let
\[
\wt{f} := \sum\{ \wt\psi_{r,s}(k_{r,s}) \mid \vartheta(r) = \vartheta(s) \in [\un{0}, \un{m}]\} \in \ker\Phi,
\]
with the understanding that for all $k_{r,s}$ in the sum with $r \neq s$ we have that either $\wt{\psi}_{r,s}(k_{r,s}) = 0$ or that $\wt\psi_{r,s}(k_{r,s}) \notin \wt{B}_{(\vartheta(r), \un{m}]}$.
Let $\un{0} \neq \un{n}$ be minimal so that $\wt\psi_{p,q}(k_{p,q}) \notin \wt{B}_{(\vartheta(p), \un{m}]}$ with $p \neq q$ and $\vartheta(p) = \un{n} =\vartheta(q)$.
Then by the Claim we derive that
\[
\pi(A) \ni t(X_q)^* \psi_{p,q}(k_{p,q}) t(X_p) \subseteq B_{(\un{0}, \un{m} - \un{n}]}.
\]
As $\un{m} - \un{n} < \un{m}$, injectivity of $\Phi$ on $\wt{B}_{[\un{0}, \un{m} - \un{n}]}$ and an approximate identity argument yields that $\wt\psi_{p,q}(k_{p,q}) \in \wt{B}_{(\vartheta(p), \un{m}]}$ which is a contradiction.
Hence we have
\[
\wt{f} = \sum\{ \wt\psi_{r}(k_{r}) \mid \vartheta(r) \in [\un{0}, \un{m}]\} \in \ker\Phi,
\]
and so $\wt{f} = 0$ by equation (\ref{eq:inj}).
This finishes the proof that $\Phi$ is injective.

Let $\tau_e \colon \ca_r(G) \to \bC$ be the compression to the $(e,e)$-position.
By \cite[Proposition 4.1.9]{BO08} the map $\id_{\N\P(X)} \otimes \tau_e$ is also a faithful conditional expectation.
Therefore the map
\[
E := (\id_{\N\P(X)} \otimes \tau_e) \circ (E' \otimes \id_{\B(\ell^2(G))}) \circ \Phi  \colon \N\P(X) \to \N\P(X) \otimes \ca_r(G)
\]
is a faithful conditional expectation with range
\begin{align*}
(\id_{\N\P(X)} \otimes \tau_e) \circ (E' \otimes \id) (\ol{\spn}\{t(X_p) t(X_q)^* \otimes U_{pq^{-1}} \mid p, q \in P\}) 
& =\\
& \hspace{-5.5cm} =
(\id_{\N\P(X)} \otimes \tau_e)(\ol{\spn}\{t(X_p) t(X_q)^* \otimes U_{pq^{-1}} \mid \vartheta(p) = \vartheta(q)\}) \\
& \hspace{-5.5cm}  =
\ol{\spn}\{\psi_p(\K X_p) \mid p \in P\}.
\end{align*}
For $\N\P(X) = \N\T(X)$ we recover the conditional expectation of Remark \ref{R:cond exp fock}.

Now fix a C*-algebra $D$.
As before the map $E' \otimes \id_{\B(\ell^2(G))} \otimes \id_D$ is a faithful conditional expectation coming from $\be \otimes \id_{\B(\ell^2(G))} \otimes \id_D$.
An application of \cite[Lemma 4.1.8]{BO08} gives that $\id_{\N\P(X)} \otimes \tau_e \otimes \id_D$ is a faithful conditional expectation, and thus so is the induced $E \otimes \id_D$ on $\N\P(X) \otimes D$.
We want to show that the same holds for $\otimes_{\max}$.
Suppose that $\N\P(X) \otimes_{\max} D$ is faithfully represented on $\B(H)$ by $\Phi_1 \times \Phi_2$ for the commuting representations
\[
\Phi_1 \colon \N\P(X) \to \B(H)
\qand
\Phi_2 \colon D \to \B(H).
\]
Then $\Phi_1 \otimes U$ and $\Phi_2 \otimes I$ also commute and thus by universality of $\otimes_{\max}$ they define a representation 
\[
\Phi \colon \N\P(X) \otimes_{\max} D \to (\N\P(X) \otimes_{\max} D) \otimes \ca_r(G).
\]
Here $(E'\otimes_{\max} \id_D) \otimes \id_{\ca_r(G)}$ is induced by the gauge action $\{(\be_{\un{z}} \otimes_{\max} \id_D) \otimes \id_{\ca_r(G)} \}_{\un{z} \in \bT^N}$ and so is a faithful conditional expectation.
In a similar way as in the first part we can show that $\Phi$ is injective by applying on the C*-algebra $\N\P(X) \otimes_{\max} D$ geverated by the product system $\{\Phi_1(X_p) \Phi_2(D)\}_{p \in P}$.
Now we can apply \cite[Lemma 4.1.8]{BO08} to $(\id_{\N\P(X)} \otimes_{\max} \id_D) \otimes \tau_e = \id_{\N\P(X) \otimes_{\max} D} \otimes \tau_e$ to derive that it is a faithful conditional expectation.
Since
\[
E \otimes_{\max} \id_D = ((\id_{\N\P(X)} \otimes_{\max} \id_D) \otimes \tau_e) \circ ((E'\otimes_{\max} \id_D) \otimes \id_{\ca_r(G)}) \circ \Phi
\]
we conclude that $E \otimes_{\max} \id_D$ is a faithful conditional expectation.
\end{proof}

%%%%%%%%%%%%%%%%%%%%%%%%%%%%%%%%
\section{Examples}\label{S:exa}
%%%%%%%%%%%%%%%%%%%%%%%%%%%%%%%%

Major examples of product systems and their operator algebras come from higher-rank graphs established by Kumjian-Pask \cite{KP00} or from the generalized crossed products of semigroup actions of Fowler \cite{Fow02}.
The former are always nuclear as product systems over $(\bZ^N, \bZ_+^N)$ with an abelian $A$, while the nuclearity of the latter depends on the embedding of $A$ in the fixed point algebra (and not solely on $A$) when the quasi-lattice satisfies the properties of our main theorem.
In this section we give some more examples of quasi-lattice ordered groups (and constructions) that are compatible with controlled maps in $(\bZ^N, \bZ_+^N)$ with the minimality property.
What appears to be essential is a unique normal form for expressing the elements.

%%%%%%%%%%%%%%%%%%%%%%%%%%%%%%%%
\subsection{The Baumslag-Solitar group for $n = m >0$}
%%%%%%%%%%%%%%%%%%%%%%%%%%%%%%%%

Recall that the Baumslag-Solitar group is defined as $B(m,n) = \sca{a, b \mid a^n b = b a^m}$.
Let $B_+(m,n)$ be its sub-semigroup generated by $a, b$.
Spielberg \cite[Theorem 2.11]{Spi12} has shown that $(B(m,n), B_+(m,n))$ is a quasi-lattice ordered group. 

We consider the case where $m=n$.
By Britton's Lemma we can write every $\ol{p} \in B_+(m,m)$ in a unique way as
\[
\ol{p} = a^{p_1} b a^{p_2} b \cdots a^{p_k} b a^{p_{k+1}}
\qfor p_1, \dots, p_k \in \{0, \dots, m-1\}  \qand p_{k+1} \in \bZ_+.
\]
As the commutation relation $ab = ba$ dominates $a^m b = b a^m$ we have that the abelianization gives a surjective controlled map
\[
\vartheta \colon (B(m,m), B_+(m,m)) \to (\bZ^2, \bZ_+^2)
\]
that counts the occurrences of $a$ and $b$ in the unique expression of $\ol{p} = a^{p_1} b a^{p_2} b \cdots a^{p_k} b a^{p_{k+1}}$ so that
\[
\vartheta(\ol{p}) \equiv \vartheta(a^{p_1} b a^{p_2} b \cdots a^{p_k} b a^{p_{k+1}}) : = (p_1 + \cdots + p_{k+1}, k).
\]

%%%%%%%%%%%%%%%%%%%%%%%%%%%%%%%%
\begin{proposition}
The abelinization map $\vartheta \colon (B(m,m), B_+(m,m)) \to (\bZ^2, \bZ_+^2)$ has the $\bZ_+^2$-minimality property.
\end{proposition}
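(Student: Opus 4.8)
The plan is to combine the unique normal form with the centrality of $z := a^m$ (immediate from $a^m b = b a^m$) and with the quotient homomorphism $\eta \colon B(m,m) \to (\bZ/m\bZ) * \bZ$ that kills $z$; writing $\bar a, \bar b$ for the images, $\bar a$ has order $m$ and $\eta$ sends $a^{p_1} b \cdots a^{p_k} b a^{p_{k+1}}$ to the reduced word $\bar a^{p_1} \bar b \cdots \bar a^{p_k} \bar b \, \bar a^{p_{k+1}}$. Throughout write $\vartheta(\cdot) = (A_\bullet, K_\bullet)$ for the pair (number of $a$'s, number of $b$'s). I would first dispose of the case where $\vartheta(p)$ and $\vartheta(q)$ are comparable in $\bZ_+^2$: if $\vartheta(p) \geq \vartheta(q)$ and $p \vee q < \infty$, then (C2) gives $\vartheta(p \vee q) = \vartheta(p)$, and since $p \leq p \vee q$ while only $e$ has trivial $\vartheta$ by (C3), we get $p \vee q = p$ and hence $q \leq p$; minimality of $p$ then forces $q = p$. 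So for distinct minimal $p \neq q$ sharing an upper bound we may assume, after relabelling, $A_p > A_q$ and $K_p < K_q$.

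The next step is to read off structure from minimality. As $A_p > A_q \geq m_1$, the element $p$ cannot terminate in $a$: deleting a trailing $a$ would produce a proper left divisor $p a^{-1}$ still lying in $\vartheta^{-1}([\un{m}, \infty])$ (its $\vartheta$ is $(A_p - 1, K_p)$ with $A_p - 1 \geq m_1$ and $K_p \geq m_2$), contradicting minimality. Hence $p = a^{p_1} b \cdots a^{p_{K_p}} b$ with every $p_i \leq m - 1$. I would now compare the blocks of $p$ and $q$ up to the $K_p$-th letter $b$. If $p_i = q_i$ for all $i \leq K_p$ then $p$ is literally a left factor of $q$ (as $K_q > K_p$), whence $p \leq q$ and $A_p \leq A_q$, which is impossible. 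Otherwise let $j \leq K_p$ be the first index with $p_j \neq q_j$; since $j \leq K_p < K_q$ both blocks are interior, so $p_j, q_j \in \{0, \dots, m-1\}$. Cancelling the common left factor $a^{p_1} b \cdots a^{p_{j-1}} b$, any common upper bound of $p$ and $q$ descends to one of the tails $p' = a^{p_j} b \cdots a^{p_{K_p}} b$ and $q' = a^{q_j} b \cdots$.

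The heart of the argument is a leftmost-syllable computation in $(\bZ/m\bZ) * \bZ$. Since $p_j \neq q_j$ inside $\{0, \dots, m-1\}$, the reduced words $\eta(p')$ and $\eta(q')$ begin with genuinely different syllables: $\bar a^{p_j}$ versus $\bar a^{q_j}$ when both are nonzero, or $\bar b$ versus a nontrivial power of $\bar a$ when one of them vanishes. The point I would stress is that both $p'$ and $q'$ still contain a $b$, so their images contain a $\bar b$-syllable; because the $\eta$-image of any positive word has only nonnegative powers of $\bar b$ in its syllables, no $\bar b$-syllable can be cancelled under right multiplication, and consequently the leftmost syllable of $\eta(p' s')$ equals that of $\eta(p')$ for every $s' \in B_+(m,m)$, and likewise for $q'$. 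An equality $p' s' = q' t'$ would then equate two distinct leftmost syllables, which is absurd; thus $p' \vee q' = \infty$ and therefore $p \vee q = \infty$.

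The genuine obstacle is precisely the centrality of $z = a^m$. Because $z$ commutes with $b$ there really are unexpected coincidences such as $a^m \vee b = a^m b < \infty$, so any argument based only on the first generator of $p$ and $q$ is doomed. What rescues the situation is minimality, used once in an essential way: it forces the element of larger $a$-count to end in $b$, which is exactly what guarantees that the divergent tails $p'$ and $q'$ each carry a $b$, hence a noncancellable $\bar b$-syllable shielding their leading letter. The one combinatorial fact I would isolate and verify carefully is this shielding lemma in the free product --- that a positive right multiple never disturbs the first syllable of a word containing a $\bar b$ --- everything else being bookkeeping with the normal form and properties (C2), (C3) of the controlled map.
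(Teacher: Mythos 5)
Your proof is correct, but it takes a genuinely different route from the paper's. The paper works directly with the join: it writes $\ol{p}\,\ol{r} = \ol{p}\vee\ol{q} = \ol{q}\,\ol{s}$ in the Britton normal form (pushing powers of the central element $a^m$ into the final block), matches initial blocks by uniqueness of that expansion, and then invokes minimality twice --- once to force the two words to have the same number of $b$'s, once to force equal final $a$-blocks --- concluding $\ol{p}=\ol{q}$. You instead (1) dispose of the case of comparable $\vartheta$-values abstractly via (C2) and (C3), a clean reduction valid for any controlled map and not isolated in the paper; (2) use minimality to force the element of larger $a$-count to end in $b$; and (3) after cancelling the common prefix at the first disagreeing block, rule out \emph{any} common upper bound of the two tails by a leftmost-syllable argument in the quotient $(\bZ/m\bZ)*\bZ$ obtained by killing $a^m$. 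What your route buys is precisely that the migration of central $a^m$-powers across $b$'s --- the delicate bookkeeping at the junction block in the paper's comparison, where the last block of $\ol{q}$ need not literally reappear as a block of $\ol{q}\,\ol{s}$ --- disappears in the quotient, so that positivity of the $\bar{b}$-exponents does all the work; the price is the extra free-product machinery. One wording caveat: your shielding lemma is not literally true as stated when $\eta(p')$ is a pure power of $\bar{b}$, since right multiplication by a positive word can enlarge that exponent (e.g.\ $\eta(b)\eta(b)=\bar{b}^{2}$). What is true, and what your argument actually needs, is that the free factor of the leading syllable is preserved under right multiplication by positive words, and that the leading syllable is preserved verbatim whenever it is an $\bar{a}$-syllable, because it is then shielded by the uncancellable $\bar{b}$-syllable immediately to its right; since you distinguish the two tails either by two distinct nontrivial $\bar{a}$-syllables or by an $\bar{a}$-syllable against a $\bar{b}$-syllable, this rephrasing closes the argument without any further change.
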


\begin{proof}
Suppose that $\ol{p}, \ol{q}$ are minimal in $\vartheta^{-1}([(m_1, m_2), \infty])$ so that $\ol{p} \vee \ol{q} < \infty$.
Then there are $\ol{r}, \ol{s} \in B_+(m,m)$ such that
\begin{align*}
 a^{p_1} b \cdots a^{p_k} b a^{\upsilon} b a^{r_2} b \cdots b a^{r_{\ell+1} + \pi m}
& =
\ol{p} \cdot \ol{r} \\
& =
\ol{p} \vee \ol{q} \\
& =
\ol{q} \cdot \ol{s}
=
 a^{q_1} b \cdots a^{q_{k'}} b a^{\upsilon'} b a^{s_2} b \cdots b a^{s_{\ell'+1} + \pi' m},
\end{align*}
for $p_{k+1} + r_1 = \pi m + \upsilon$ and $q_{k'+1} + s_1 = \pi' m + \upsilon'$.
By assumption $p_1 + \cdots + p_{k + 1} \geq m_1$, $q_1 + \cdots + q_{k' + 1} \geq m_1$ and $k, k' \geq m_2$.
Uniqueness of the expansion of $\ol{p} \vee \ol{q}$ gives that
\[
p_i
=
q_i \qfor i = 1, \dots, \min\{k, k'\}.
\]
First we show that $k = k'$.
If $k > k'$ then let $\ol{p}' = a^{p_1} b \cdots b a^{p_{k' + 1}}$ for which we get that $\ol{p}' < \ol{p}$.
Moreover
\[
p_1 + \cdots + p_{k'+1}
=
q_1 + \cdots + q_{k'+1}
\geq
m_1
\]
and therefore we deduce that $\vartheta(\ol{p}') = (q_1 + \cdots + q_{k'+1}, k') \geq (m_1, m_2)$.
But this contradicts minimality of $\ol{p}$ in $\vartheta^{-1}([(m_1, m_2), \infty])$.
By symmetry we also have that $k \not< k'$ and thus $k = k'$.
Now if $q_{k+1} < p_{k+1}$ then $\ol{q} < \ol{p}$ which is a contradiction; likewise if $q_{k+1} > p_{k+1}$.
Therefore we get the required $\ol{p} = \ol{q}$.
\end{proof}

%%%%%%%%%%%%%%%%%%%%%%%%%%%%%%%%
\subsection{Semidirect products}
%%%%%%%%%%%%%%%%%%%%%%%%%%%%%%%%

Let $(G,P)$ and $(H,S)$ be quasi-lattice ordered groups and let an action $\al \colon H \to \Aut(G)$ such that $\al|_S \colon S \to \Aut(P)$ restricts to automorphisms of $P$.
Then we can form the semi-direct products $G \rtimes_\al H$ and $P \rtimes_\al S$ with respect to the relations $\al_h(g) h = h g$.

The condition on $\al$ makes $P \cdot S$ a subsemigroup of the semi-direct product, and the pair $(G \rtimes_\al H, P \rtimes_\al S)$ is quasi-lattice ordered.
Indeed, recall that every element $x \in G \rtimes_\al H$ has a form as $x = g h$ for unique $g \in G$ and $h \in H$.
Suppose thus that $g_1 h_1$ and $g_2 h_2$ have a common upper bound in $P \rtimes_\al S$.
Hence there are $r_1, r_2 \in G$ and $t_1, t_2 \in H$ such that
\begin{equation}\label{eq:sd lo}
(g_1 \al_{h_1}(r_1)) (h_1 t_1)
=
(g_1 h_1) (r_1 t_1)
=
(g_2 h_2) (r_2 t_2)
=
(g_2 \al_{h_2}(r_2)) (h_2 t_2).
\end{equation}
Uniqueness of the form shows that $g_1 \vee g_2 < \infty$ in $P$ and $h_1 \vee h_2 < \infty$ in $S$.
Therefore $g_1, g_2$ have a least common upper bound say $p \in P$ and $h_1, h_2$ have a least common upper bound $ s \in S$.
We can apply equation (\ref{eq:sd lo}) in reverse using the $r_1, r_2$ and $t_1, t_2$ that satisfy
\[
g_1 \al_{h_1}(r_1) = p = g_2 \al_{h_2}(r_2)
\qand
h_1 t_1 = s = h_2 t_2,
\]
and we derive that $p s$ is the least common upper bound in $P \rtimes_\al S$.
In particular this shows that $(g_1 h_1) \vee (g_2 h_2)$ if and only if $g_1 \vee g_2 <\infty$ and $h_1 \vee h_2 < \infty$, in which case we get
\begin{equation}\label{eq:sd vee}
(g_1 h_1) \vee (g_2 h_2) = (g_1 \vee g_2) (h_1 \vee h_2).
\end{equation}

Now suppose that $(G, P)$ admits a controlled map $\vartheta_N$ in $(\bZ^{N}, \bZ_+^{N})$ and $(H, S)$ admits a controlled map $\vartheta_M$ in $(\bZ^{M}, \bZ_+^{M})$.
In order for the semi-direct product to inherit the obvious controlled map on $(\bZ^{N+M}, \bZ_+^{N+M})$ it is necessary that $\al$ is $\vartheta_N$-invariant in the sense that $\vartheta_N \al_h = \vartheta_N$ for all $h \in H$.
We can then define the homomorphism
\[
\vartheta \colon (G \rtimes_\al H, P \rtimes_\al S) \to (\bZ^{N+M}, \bZ_+^{N+M}) \textup{ such that } \vartheta(g h) = (\vartheta_N(g), \vartheta_M(h)).
\]

%%%%%%%%%%%%%%%%%%%%%%%%%%%%%%%%
\begin{proposition}
If the maps $\vartheta_N \colon (G, P) \to (\bZ^N, \bZ_+^N)$ and $\vartheta_M \colon (H, S) \to (\bZ^M, \bZ_+^M)$ have the minimality property and $\vartheta_N \al_h = \vartheta_N$ for all $h \in H$, then the induced map
\[
\vartheta \colon (G \rtimes_\al H, P \rtimes_\al S) \to (\bZ^{N + M}, \bZ_+^{N + M})
\]
satisfies the $\bZ_+^{N + M}$-minimality property.
\end{proposition}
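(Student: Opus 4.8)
The plan is to reduce the minimality property for $\vartheta$ to the minimality properties of the two factors by exploiting the fact that both the order and the preimages split as products. The first step is to record that the partial order on $P \rtimes_\al S$ is the product order: for $g_1 h_1, g_2 h_2 \in P \rtimes_\al S$ one has $g_1 h_1 \leq g_2 h_2$ if and only if $g_1 \leq g_2$ in $P$ and $h_1 \leq h_2$ in $S$. I would obtain this either from the direct computation $(g_1 h_1)^{-1}(g_2 h_2) = \al_{h_1^{-1}}(g_1^{-1} g_2)\, h_1^{-1} h_2$, together with the observation that $\al_{h_1^{-1}}$ restricts to a bijection of $P$ (so the $G$-part lies in $P$ exactly when $g_1^{-1} g_2 \in P$), or more cheaply from the lattice formula (\ref{eq:sd vee}) and uniqueness of the normal form $gh$.

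The second step identifies the relevant preimage. Since $\vartheta(gh) = (\vartheta_N(g), \vartheta_M(h))$ — a well-defined homomorphism precisely because of the hypothesis $\vartheta_N \al_h = \vartheta_N$ — for $(\un{m}, \un{m}') \in \bZ_+^{N+M}$ we have $\vartheta^{-1}([(\un{m}, \un{m}'), \infty]) = \{ gh \mid g \in \vartheta_N^{-1}([\un{m}, \infty]),\ h \in \vartheta_M^{-1}([\un{m}', \infty]) \}$. Combining this product description with the product order, I would prove the key claim that $gh$ is minimal in $\vartheta^{-1}([(\un{m}, \un{m}'), \infty])$ if and only if $g$ is minimal in $\vartheta_N^{-1}([\un{m}, \infty])$ and $h$ is minimal in $\vartheta_M^{-1}([\un{m}', \infty])$. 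The reverse implication is immediate from the product order; for the forward implication I would argue contrapositively, replacing a strictly smaller $g' < g$ in $\vartheta_N^{-1}([\un{m}, \infty])$ by the element $g' h$ (or symmetrically a smaller $h'$ by $g h'$), which still lies in the preimage and is strictly below $gh$, contradicting minimality of $gh$.

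With this factorisation in hand the conclusion is short. Taking distinct minimal elements $g_1 h_1 \neq g_2 h_2$ of $\vartheta^{-1}([(\un{m}, \un{m}'), \infty])$, the claim shows that $g_1, g_2$ are minimal in $\vartheta_N^{-1}([\un{m}, \infty])$ and $h_1, h_2$ are minimal in $\vartheta_M^{-1}([\un{m}', \infty])$, while distinctness forces $g_1 \neq g_2$ or $h_1 \neq h_2$. In the first case the minimality property of $\vartheta_N$ gives $g_1 \vee g_2 = \infty$, in the second that of $\vartheta_M$ gives $h_1 \vee h_2 = \infty$; either way the sentence preceding (\ref{eq:sd vee}) forces $(g_1 h_1) \vee (g_2 h_2) = \infty$, which is exactly the minimality property for $\vartheta$.

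The main obstacle is the factorisation of minimality across the semidirect product, and the only genuinely delicate point is that the distinctness of $g_1 h_1$ and $g_2 h_2$ may be caused entirely by the $S$-coordinate (with $g_1 = g_2$); one must therefore be careful to invoke the minimality property of whichever factor actually sees the difference, rather than assuming both coordinates differ. Everything else is bookkeeping with the product order and the normal form $gh$.
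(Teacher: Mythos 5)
Your proposal is correct and takes essentially the same route as the paper: your key claim that minimality in $\vartheta^{-1}([(\un{m},\un{m}'),\infty])$ factors into minimality of each coordinate is exactly the paper's argument that $p_1,p_2$ and $h_1,h_2$ are minimal in $\vartheta_N^{-1}([\un{n},\infty])$ and $\vartheta_M^{-1}([\un{m},\infty])$ (both use the normal form, the fact that $\al_{h^{-1}}$ preserves $P$, and the coordinatewise description of least upper bounds in equation (\ref{eq:sd vee})), after which the factor minimality properties finish the proof. The only difference is cosmetic: you run the argument in the direct form (distinct minimal elements have no common upper bound), while the paper argues the equivalent contrapositive (minimal elements with a common upper bound coincide).
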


\begin{proof}
Let $p_1 h_1$ and $p_2 h_2$ be minimal in $\vartheta^{-1}([(\un{n}, \un{m}), \infty])$ with $(p_1 h_1) \vee (p_2 h_2) < \infty$, for $(\un{n}, \un{m}) \in \bZ_+^N \times \bZ_+^M$.
Then $p_1 \vee p_2 < \infty$ and $h_1 \vee h_2 < \infty$.
It also follows that $p_1, p_2$ are minimal in $\vartheta^{-1}_N([\un{n}, \infty])$ and $h_1, h_2$ are minimal in $\vartheta^{-1}_M([\un{m}, \infty])$.
Indeed if $p_1$ were not minimal in $\vartheta^{-1}_N([\un{n}, \infty])$ then let $r \in \vartheta^{-1}_N([\un{n}, \infty])$ so that $p_1 = r r'$.
This gives the contradiction that $p_1 h_1 = rr'h_1 = r h_1 \al_{h_1}^{-1}(r')$ with
\[
\vartheta(r h_1) \geq (\vartheta_N(r), 0) + (0, \vartheta_M(h_1)) \geq (\un{n}, \un{m}).
\]
Likewise if $h_1$ were not minimal in $\vartheta^{-1}_M([\un{m}, \infty])$ then write $h_1 = t t'$ with $\vartheta_M(t) \geq \un{m}$.
But then we get the contradiction $p_1 h_1 = (p_1 t) t'$ with
\[
\vartheta(p_1 t) \geq (\vartheta_N(p_1), 0) + (0, \vartheta_M(t)) \geq (\un{n}, \un{m}).
\]
As $p_1 \vee p_2 < \infty$, $h_1 \vee h_2 < \infty$ and they are minimal we have that $p_1 = p_2$ and $h_1 = h_2$ giving that $p_1 h_1 = p_2 h_2$.
\end{proof}

%%%%%%%%%%%%%%%%%%%%%%%%%%%%%%%%
\subsection{Graph products}
%%%%%%%%%%%%%%%%%%%%%%%%%%%%%%%%

Let $\Ga$ be an undirected finite graph such that there is at most one vertex between any two vertices, and it has no loops.
Let $(G_I, P_I)$ be a quasi-lattice ordered group for each vertex $I \in \Ga$.
The graph product $G \equiv G_\Ga$ is then the group generated by $\sqcup_{I \in \Ga} G_I$ subject to identifying the units $e_I$ and to allowing commutation of elements between $G_I$ and $G_J$ if and only if the vertices $I$ and $J$ are adjacent.
By letting $P$ be the subgroup generated by the $P_I$ in $G$ one obtains a pair $(G, P)$.
Crisp-Laca \cite{CL02} showed that $(G,P)$ is a quasi-lattice ordered group.
When $(G_I, P_I) = (\bZ, \bZ_+)$ for all $I$ then we get a right-angled Artin group on the graph $\Ga$.
We will show that Theorem \ref{T:nuc con} accommodates such settings.

Green \cite{Gre90} has shown that an element $p \in P$ admits a reduced expression $p = p_{I_1} \cdots p_{I_k}$ such that if $I_i = I_j$ then there exists a $k$ between $i$ and $j$ such that $I_k$ is not adjacent to $I_i$.
A syllable $p_{I_i} \neq e_{I_i}$ is called \emph{initial} if $I_i$ is adjacent to all $I_j$ for $j < i$.
The vertex $I_i$ is called \emph{initial} and we write $\De(p)$ for the set of all initial vertices of $p$.
As reduced expressions are shuffle equivalent, it follows that $\De(p)$ (and thus $p_I$) does not depend on the reduced form of $p$.
For $p \in P$ and $I \in \Ga$ let
\[
p(I)
:=
\begin{cases}
p_I & \text{ if } I \in \De(p), \\
e = e_I & \text{ otherwise}.
\end{cases}
\]
For convenience we will also write
\[
C(p) := \{I \in \Ga \mid I \text{ is adjacent to all vertices on } p\}.
\]
Let $p, q \in P$ and write $p = p(I) p'$ and $q = q(I) q'$ for a fixed $I \in \Ga$.
Crisp-Laca \cite[Definition 12]{CL02} define $p, q \in P$ to be $I$-adjacent if:
\begin{enumerate}
\item $p \vee q < \infty$;
\item either $p(I) = p(I) \vee q(I)$ or $I \in C(p(I)^{-1} p)$;
\item either $q(I) = p(I) \vee q(I)$ or $I \in C(q(I)^{-1} q)$.
\end{enumerate}
Then in \cite[Proposition 13]{CL02} it is shown that $p \vee q < \infty$ if and only if they are $I$-adjacent and $p' \vee q' < \infty$, in which case
\[
p \vee q = (p(I) \vee q(I)) \cdot (p' \vee q').
\]

Passing now to controlled maps, suppose that every $(G_I, P_I)$ obtains a controlled map over $(\bZ_+^{N_I}, \bZ_+^{N_I})$.
By taking the completion of $\Ga$, Crisp-Laca \cite{CL07} show that the graph product $(G, P)$ obtains a controlled map over $(\bZ^N, \bZ_+^N)$ for $N := \sum_I N_I$.
We will write $m_I \in \bZ^{N_I}$ for the $I$-part of $\un{m} \in \bZ^{N}$.
By using the embeddings $\bZ^{N_I} \hookrightarrow \bZ^N$ we may write $\un{m} = \sum_{I \in \Ga} m_{I}$.

%%%%%%%%%%%%%%%%%%%%%%%%%%%%%%%%
\begin{proposition}
Let $(G,P)$ be the graph product of the quasi-lattices $(G_I, P_I)$.
If the maps $\vartheta_I \colon (G_I, P_I) \to (\bZ^{N_I}, \bZ^{N_I}_+)$ have the minimality property then so does the completion map
\[
\vartheta \colon (G, P) \to (\bZ^N, \bZ_+^N) \qfor N := \sum_I N_I.
\]
\end{proposition}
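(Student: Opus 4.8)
The plan is to argue by contradiction, by strong induction on the total syllable length $\ell(p)+\ell(q)$: supposing $p\neq q$ are both minimal in $\vartheta^{-1}([\un{m},\infty])$ with $p\vee q<\infty$, I will force $p=q$. If $p=e$ (or $q=e$) then $\vartheta(p)=\un{0}\geq\un{m}$ gives $\un{m}=\un{0}$, so $e$ is the unique minimal element and $p=q=e$; thus I may assume $p,q\neq e$. The first step is to show $\De(p)=\De(q)$. If some $I\in\De(p)\setminus\De(q)$, then $q(I)=e$, and since $p\vee q<\infty$ the elements $p,q$ are $I$-adjacent in the sense of Crisp-Laca \cite[Proposition 13]{CL02}; as $p(I)=p_I\neq e$ this forces $I\in C(q)$. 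Because $\Ga$ has no loops, $I\in C(q)$ means $I$ is not a vertex of $q$, so $\vartheta(q)_I=\un{0}$ and hence $m_I=\un{0}$. Then the proper left divisor $p':=p_I^{-1}p<p$ still satisfies $\vartheta(p')\geq\un{m}$ (the $I$-component requirement is now vacuous and the others are unchanged), contradicting minimality of $p$. By symmetry $\De(p)=\De(q)=:\De\neq\emptyset$.

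Now fix $I\in\De$ and write $p=p_Ip'$, $q=q_Iq'$ with $p_I=p(I)$, $q_I=q(I)$ nonzero in $P_I$ and $p_I\vee q_I<\infty$; the heart of the matter is to prove $p_I=q_I$. I split along the two Crisp-Laca alternatives for the $I$-adjacency of $p$ and of $q$. When $I\in C(p')$ and $I\in C(q')$, the remainders carry no $I$-content and commute with $P_I$, and a short computation shows that both $p_I$ and $q_I$ are minimal in $\vartheta_I^{-1}([m_I,\infty])$: any $g_I\leq p_I$ with $\vartheta_I(g_I)\geq m_I$ yields $g_Ip'\leq p$ with $\vartheta(g_Ip')\geq\un{m}$, so $g_I=p_I$. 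Since $p_I\vee q_I<\infty$, the minimality property of $\vartheta_I$ then gives $p_I=q_I$. In the asymmetric case, say $q_I\leq p_I$ with $I\in C(q')$, the same argument applied to $q$ shows $q_I$ is minimal in $\vartheta_I^{-1}([m_I,\infty])$, whence $\vartheta_I(p_I)\geq\vartheta_I(q_I)\geq m_I$.

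The key reduction, valid for every $I\in\De$, is that $p'$ is minimal in $\vartheta^{-1}([\un{m}',\infty])$ with $\un{m}':=(\un{m}-\vartheta_I(p_I))\vee\un{0}$: if $s<p'$ had $\vartheta(s)\geq\un{m}'$, then by left invariance $p_Is<p$ and $\vartheta(p_Is)=\vartheta_I(p_I)+\vartheta(s)\geq\un{m}$, contradicting minimality of $p$. Likewise $q'$ is minimal for $(\un{m}-\vartheta_I(q_I))\vee\un{0}$, and in both cases above these two targets coincide (each has a zero $I$-component and the unchanged values $m_J$ elsewhere). Since $\ell(p')+\ell(q')<\ell(p)+\ell(q)$ and $p'\vee q'<\infty$, the inductive hypothesis yields $p'=q'$. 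In the symmetric/equal case this already gives $p=p_Ip'=q_Iq'=q$, a contradiction. In the asymmetric case $p'=q'$ upgrades $I\in C(q')$ to $I\in C(p')$, so $p'$ commutes with $c_I:=q_I^{-1}p_I$ and $q^{-1}p=p'^{-1}c_Ip'=c_I\in P$; thus $q\leq p$ with $\vartheta(q)\geq\un{m}$, again contradicting minimality of $p$. In every case we reach a contradiction, closing the induction.

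I expect the main obstacle to be precisely this asymmetric case, where one initial syllable properly divides the other. Here the failure of right invariance of the order — conjugation of $c_I\in P_I$ by the remainder $p'$ need not land in $P$ — blocks any direct reduction of the initial $I$-syllable, and one only knows the one-sided commutation $I\in C(q')$. It is the inductive hypothesis that breaks the deadlock: it identifies $p'=q'$, which promotes $I\in C(q')$ to $I\in C(p')$ and finally makes the comparison $q\leq p$ available. The remaining work is bookkeeping that must nonetheless be checked carefully: that the reduced targets genuinely agree, that $p_I\vee q_I<\infty$ and $p'\vee q'<\infty$ descend correctly from $p\vee q<\infty$ via the join formula $p\vee q=(p_I\vee q_I)(p'\vee q')$, and that the minimality of $p_I$ in $P_I$ is extracted exactly when the relevant commutation $I\in C(\cdot)$ holds.
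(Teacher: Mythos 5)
Your inductive core is correct, but the first step of your argument --- the claim that $\De(p)=\De(q)$ --- rests on a false assertion, and everything after it is built on that step. Having taken $I\in\De(p)\setminus\De(q)$ and correctly deduced $I\in C(q)$ and $m_I=\un{0}$, you declare that $p_I^{-1}p$ is a ``proper left divisor'' of $p$ with $p_I^{-1}p<p$, and contradict minimality of $p$. In the order used throughout the paper ($g\leq h$ iff $g^{-1}h\in P$) the elements below $p$ are its \emph{prefixes}: $r\leq p$ means $p=rs$ with $s\in P$. The element $p_I^{-1}p$ is a \emph{suffix}, and $p_I^{-1}p\leq p$ would require $p^{-1}p_I\,p\in P$, which fails in general. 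Concretely, for the right-angled Artin group on two non-adjacent vertices --- so $G=\bF_2=\langle a,b\rangle$, $P$ the free monoid on $a,b$, and $\vartheta$ the abelianization --- the element $p=ab$ has $p_I=a$ and $p_I^{-1}p=b$, yet $b\not\leq ab$ because $b^{-1}ab\notin P$. Moreover, no repair of this step can use only the facts you invoke ($I\in\De(p)$, $m_I=\un{0}$, minimality of $p$): in this same example $p=ab$ \emph{is} minimal in $\vartheta^{-1}([(0,1),\infty])$, even though $a$ is an initial syllable and the $I$-coordinate of $(0,1)$ is $0$, so no contradiction is available from those facts alone; the hypothetical $q$ must enter the argument.

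The gap is repairable with your own tools: drop the separate lemma $\De(p)=\De(q)$ and absorb the case $I\in\De(p)$, $q(I)=e$ into your asymmetric case inside the induction. Indeed, $I$-adjacency forces $I\in C(q)$, hence $m_I=\un{0}$ exactly as you argued; your (correct) key reduction makes $p'=p_I^{-1}p$ minimal for $(\un{m}-\vartheta_I(p_I))\vee\un{0}$, which equals $\un{m}$ because $m_I=\un{0}$, while $q$ itself is minimal for $\un{m}$; the join formula of \cite[Proposition 13]{CL02} gives $p'\vee q<\infty$; the inductive hypothesis (total syllable length has dropped) gives $p'=q$; and then $p=p_Ip'=p_Iq=qp_I$ because $I\in C(q)$, so $q<p$ with $\vartheta(q)\geq\un{m}$, contradicting minimality of $p$. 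With this modification your proof closes, and it is genuinely different from the paper's: the paper first reduces $p$ and $q$ to a part supported on pairwise-adjacent vertices and a complementary part whose $\vartheta$-images have disjoint supports, and treats the two pieces by separate case analyses, whereas you peel off one initial syllable at a time by induction on syllable length. Your correct ingredients --- the left-invariance reduction, the extraction of minimality of $p_I$ in $\vartheta_I^{-1}([m_I,\infty])$ when $I\in C(p')$, and the endgame $q^{-1}p=c_I\in P$ --- are exactly what is needed for that strategy.
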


\begin{proof}
We start by noticing that if $p = xy$ is minimal in $\vartheta^{-1}([\un{m}, \infty])$ then $y$ is minimal in $\vartheta^{-1}([(\un{m} - \vartheta(x)) \vee \un{0}, \infty])$.
Thus we can reduce by deleting common left parts.

Let $p, q \in P$ such that $p \vee q < \infty$.
Start with $I \in \De(p) \setminus C(p)$ and apply part (ii) of \cite[Proposition 13]{CL02} to get that $p_I \vee q(I) = p_I \neq e_I$.
If $I \notin C(q(I)^{-1} q)$ then $q(I) = p_I$, otherwise $q(I) \leq p_I$.
Now \cite[Proposition 13]{CL02} asserts that $(p_I^{-1} p) \vee (q(I)^{-1} q) < \infty$.
We continue until we exhaust subsequent initial vertices to obtain $r, s \in P$ with
\[
p = p_{I_1} \cdots p_{I_k} r
\qand
q = q(I_1) \cdots q(I_k) s
\]
so that the vertices $I_1, \dots, I_k \notin C(p)$, the $r$ is on $C(p)$, and with $q(I_i) = p_{I_i}$ when $I_i$ is not in $C(q(I_{i-1})^{-1} \cdots q(I_1)^{-1} q)$ and $q(I_i) \leq p_{I_i}$ when $I$ is in $C(q(I_{i-1})^{-1} \cdots q(I_1)^{-1} q)  \supseteq \{I_i, \dots, I_k\}$.
Therefore
\[
q(I_1) \cdots q(I_k) \cdot x(I_1) \cdots x(I_k) = [q(I_1) x(I_1)] \cdot [q(I_2) x(I_2)] \cdots [q(I_k) x(I_k)] = p_{I_1} \cdots p_{I_k}
\]
for
\[
x(I_i) :=
\begin{cases}
e & \textup{ if } I_i \notin C(q(I_{i-1})^{-1} \cdots q(I_1)^{-1} q), \\
q(I_i)^{-1} p_{I_i} & \textup{ if } I_i \in C(q(I_{i-1})^{-1} \cdots q(I_1)^{-1} q).
\end{cases}
\]
Thus $q(I_1) \cdots q(I_k) \leq p_{I_1} \cdots p_{I_k}$ so that we can delete the smaller part.
Now we repeat for $q$ in place of $p$.
Hence without loss of generality we can write
\[
p = p' p'' \qand q = q' q''
\]
where $p''$ is on $C(p)$ with $p''_I = p_I$ and $q(I) = e$ for all vertices $I$ in $p'$.
Likewise for the vertices for $q'$ and $q''$ and so $\vartheta(p') \wedge \vartheta(q') = \un{0}$.

\smallskip

\noindent
{\bf Case 1.} Suppose first that $p = p''$ and $q = q''$.
We will show that $p = q$.
To this end write 
\[
p = p_{I_1} \cdots p_{I_k} \qand q = q_{J_1} \cdots q_{J_\ell}
\]
such that $I_i \in C(p)$ and $J_j \in C(q)$.
Without loss of generality suppose that $I_1 \notin \{J_1, \dots, J_{\ell}\}$ so that $p_{I_1} > e_{I_1} = q(I_1)$, and write
\[
p = p_{I_1} \cdots p_{I_k} \qand q = e_{I_1} q_{J_1} \cdots q_{J_\ell}.
\]
Clearly $q(I_1) \neq p_{I_1} \vee q(I_1)$ and we may invoke part (iii) of \cite[Proposition 13]{CL02} to deduce that $I_1$ must be adjacent to all $J_1, \dots, J_\ell$.
Continuing inductively we get that $I_i$ and $J_j$ are adjacent when $I_i \neq J_j$.
Therefore we can write
\[
p = p(I_1) \cdots p(I_n) \qand q = q(I_1) \cdots q(I_n)
\]
such that all $I_1, \dots, I_n$ are adjacent, but with the understanding that now some of the elements may be the identity.
By \cite[Proposition 13]{CL02} we have that
\[
p(I_i) \vee q(I_i) < \infty \foral i = 1, \dots, n.
\]
Let us write $\un{m} = \sum_{i=1}^n m_{I_i}$ by using the embeddings $\bZ^{N_I} \hookrightarrow \bZ^N$.
If $p(I_1)$ is not minimal in $\vartheta^{-1}_{I_1}([m_{I_1}, \infty])$ then we can write $p(I_1) = r(I_1) r'(I_1)$ with $\vartheta(r(I_1)) \geq m_{I_1}$.
But then we would have that
\[
p = r(I_1)  r'(I_1) p(I_2) \cdots p(I_n) 
= \left(r(I_1) p(I_2) \cdots p(I_n) \right) \cdot r'(I_1),
\]
as the $I_i$ are all adjacent, and with
\[
\vartheta(r(I_1) p(I_2) \cdots p(I_n))
=
\vartheta(r(I_1)) + \vartheta(p(I_2)) + \cdots + \vartheta(p(I_n))
\geq
m_{I_1} + m_{I_2} + \cdots + m_{I_n}
=
\un{m}.
\]
This contradicts minimality of $p$.
Likewise for $q(I_1)$.
Therefore $p(I_1)$ and $q(I_1)$ are minimal in $\vartheta^{-1}([m_{I_1}, \infty])$.
However as $p(I_1) \vee q(I_1) < \infty$ and $(G_{I_1}, P_{I_1})$ has the minimality property, then we get that $p(I_1) = q(I_1)$.
Since all $I_i$ are adjacent, we may apply for $I_i$ in the place of $I_1$ and thus get that $p(I_i) = q(I_i)$ for all $i = 1, \dots, n$ giving the required $p = q$.

\smallskip

\noindent
{\bf Case 2.} Let us now pass to the general case for $p = p' p''$ and $q = q' q''$ with $p''$ on $C(p)$ and $q''$ on $C(q)$ and $\vartheta(p') \wedge \vartheta(q') = \un{0}$.
Write
\[
p = p'' p' \qand q = q'' q'.
\]
As $p \vee q < \infty$ we get that $p'' \vee q'' < \infty$ and the first part of Case 1 gives that the vertices on $p''$ and $q''$ are adjacent.
We can use that every $I$ on $p''$ is in $C(p)$ and so
\[
\un{m}_I \leq \vartheta(p)_I = \vartheta_I(p_I) = \vartheta_I(p''_I) = \vartheta(p'')_I \foral I \in C(p).
\]
Now proceed as in Case 1 to show that $p''_I$ is minimal in $\vartheta^{-1}([m_I, \infty])$.
Likewise for $q''(I)$ and applying for all vertices in $C(p) \cup C(q)$ gives that $p'' = q'' =: s$.
By deleting this left common part we get that $p'$ and $q'$ are minimal in $\vartheta^{-1}([(\un{m} - \vartheta(s)) \vee \un{0}, \infty])$ with $p' \vee q' < \infty$.
But recall that $\vartheta(p')$ and $\vartheta(q')$ are supported on disjoint directions and so
\[
(\un{m} - \vartheta(s)) \vee \un{0} \leq \vartheta(p') \wedge \vartheta(q') = \un{0}.
\]
Thus $p'$ and $q'$ are minimal in $\vartheta^{-1}([\un{0}, \infty])$ and so $p' = q' = e$.
Consequently $p = p'' = q'' = q$, and the proof is complete.
\end{proof}

%%%%%%%%%%%%%%%%%%%%%%%%%%%%%%%%
\begin{acknow}
The author would like to thank Adam Dor-On for his plenty helpful suggestions and comments on a preprint of this paper.
Part of this research was conducted during the visit of the author at the Lorenz Center for the ``Cuntz-Pimsner Cross-Pollination'' workshop.
The author is grateful to the Conference organizers for the invitation to participate and the warm hospitality.
\end{acknow}

%%%%%%%%%%%%%%%%%%%%%%%%%%%%%%%%%%%%%%%%%%%

\end{document}